\theoremstyle{plain}
\newtheorem{theorem}{Theorem}[section]
\newtheorem{lemma}[theorem]{Lemma}
\theoremstyle{definition}
\newtheorem{definition}[theorem]{Definition}
\newtheorem{example}[theorem]{Example}
\begin{document}

\title[Existence of Three Positive Solutions]{Existence of
Three Positive Solutions to Some $p$-Laplacian Boundary Value Problems}

\thanks{This is a preprint of a paper whose final and definite form
will be published in \emph{Discrete Dynamics in Nature and Society}.
Submitted 24-March-2012; revised 05-Aug-2012; accepted 19-Oct-2012.}


\author[M. R. Sidi Ammi]{Moulay Rchid Sidi Ammi}

\address{AMNEA Group, Department of Mathematics,
Faculty of Sciences and Technics,
Moulay Ismail University,
B.P. 509, Errachidia, Morocco}

\email{sidiammi@ua.pt}


\author[D. F. M. Torres]{Delfim F. M. Torres}

\address{Center for Research and Development in Mathematics and Applications,
Department of Mathematics, University of Aveiro, 3810-193 Aveiro, Portugal}

\email{delfim@ua.pt}


\begin{abstract}
We obtain, by using the Leggett--Williams fixed point theorem,
sufficient conditions that ensure the existence of at least
three positive solutions to some $p$-Laplacian
boundary value problems on time scales.
\end{abstract}


\subjclass[2010]{Primary: 35B09, 39A10; Secondary: 93C70}

\keywords{Time scales, $p$-Laplacian, positive solutions,
existence, Leggett--Williams' fixed point theorem}

\maketitle


\section{Introduction}

The study of dynamic equations on time scales goes back to
the 1989 Ph.D. thesis of Stefan Hilger \cite{h1,h2},
and is currently an area of mathematics receiving
considerable recent attention \cite{abra,china-Xiamen,MyID:224,MR2794990,DDNS:12}.
Although the basic aim of the theory of time scales
is to unify the study of differential
and difference equations in one and the same subject,
it also extends these classical domains
to hybrid and ``in between'' cases. A great deal of work has
been done since the eighties of the XX century in unifying
the theories of differential and difference equations
by establishing more general results in the time scale setting
\cite{a3,b1,b2,ID:240735,r:1}.

Boundary value $p$-Laplacian problems
for differential equations and finite difference equations
have been studied extensively (see, \textrm{e.g.}, \cite{agra} and references therein).
Although many existence results for dynamic equations on time scales
are available \cite{and,kau}, there are not many results concerning
$p$-Laplacian problems on time scales \cite{anderson,jipam,ijpam,da}.
In this paper we prove new existence results for three classes of
$p$-Laplacian boundary value problems on time scales. In contrast with
our previous works \cite{jipam} and \cite{ijpam},
which make use of the Krasnoselskii fixed point theorem
and the fixed point index theory, respectively,
here we use the Leggett--Williams fixed point theorem
\cite{k1,zhao} obtaining multiplicity of positive solutions.
The application of the Leggett--Williams fixed point theorem
for proving multiplicity of solutions for boundary value
problems on time scales was first introduced
by Agarwal and O'Regan \cite{r2}, and is now recognized
as an important tool to prove existence of positive solutions
for boundary value problems on time scales \cite{r4,r5,r6,r3,r7,r8}.

The paper is organized as follows. In Section~\ref{sec:prelim}
we present some necessary results from the theory of time scales (\S\ref{subsec:prelim:ts})
and the theory of cones in Banach spaces (\S\ref{subsec:prelim:cones}).
We end \S\ref{subsec:prelim:cones} with the Leggett--Williams fixed point theorem
for a cone preserving operator, which is our main tool in proving
existence of positive solutions to the boundary value problems
on time scales we consider in Section~\ref{sec:mr}.
The contribution of the paper is Section~\ref{sec:mr}, which is divided
in three parts. The purpose of the first part (\S\ref{sec:mr:part1}) is to prove
existence of positive solutions
to the nonlocal $p$-Laplacian dynamic equation on time scales
\begin{equation}
\label{eq1}
-\left ( \phi_{p}(u^{\Delta}(t))\right)^{\nabla}=
\frac{\lambda f(u(t))}{( \int_{0}^{T} f(u(\tau ))\, \nabla \tau )^{2}},
\quad  t \in  (0,T)_{\mathbb{T}} \, ,
\end{equation}
satisfying the boundary conditions
\begin{equation}
\label{eq2}
\begin{gathered}
\phi_{p}(u^{\Delta}(0)) - \beta \left (
\phi_{p}(u^{\Delta}(\eta))\right)=0,\\
u(T) -\beta u(\eta)=0,
\end{gathered}
\end{equation}
where $\eta \in (0,T)_{\mathbb{T}}$,
$\phi_{p}(\cdot)$ is the $p$-Laplacian operator defined by
$\phi_{p}(s)= |s|^{p-2} s$, $p>1$, and $(\phi_{p})^{-1}= \phi_{q}$ with $q$
the Holder conjugate of $p$, \textrm{i.e.}, $\frac{1}{p}+ \frac{1}{q}= 1$.
The concrete value of $p$ is connected with the application at hands.
For $p=2$, for example, problem \eqref{eq1}--\eqref{eq2} describes the operation
of a device flowed by an electric current, \textrm{e.g.}, thermistors \cite{sd},
which are devices made from materials whose electrical conductivity
is highly dependent on the temperature. Thermistors have the advantage
of being temperature measurement devices of low cost, high resolution,
and flexible in size and shape. Constant $\lambda$ in \eqref{eq1} is a dimensionless parameter
that can be identified with the square of the applied potential difference at the ends
of a conductor; $f(u)$ is the temperature dependent resistivity of
the conductor; and $\beta$ in \eqref{eq2} is a transfer coefficient supposed to verify
$0< \beta < 1$. For a more detailed discussion about the physical justification of
equations \eqref{eq1}--\eqref{eq2} the reader is referred to \cite{jipam}.
Theoretical analysis (existence, uniqueness, regularity, and asymptotic results)
for thermistor problems with various types of boundary
and initial conditions have received significant attention in the last few years
for the particular case $\mathbb{T} = \mathbb{R}$ \cite{ant,cim,fow,shi,xu}.
The second part of our results (\S\ref{sec:mr:part2}) is concerned with
the following quasilinear elliptic problem:
\begin{equation} \label{eqn1}
\begin{gathered}
-\left ( \phi_{p}(u^{\Delta}(t))\right)^{\nabla}= f(u(t)) +
h(t)\, , \quad  t \in  (0,T)_{\mathbb{T}} \, ,\\
u^{\Delta}(0)=0  \, ,\quad u(T) - u(\eta)=0 \, ,
\end{gathered}
\end{equation}
where $\eta \in (0,T)_{\mathbb{T}}$.
Results on existence of radially infinity many solutions to \eqref{eqn1} are proved
in the literature using: (i) variational methods,
where solutions are obtained as critical points of some energy
functional on a Sobolev space, with $f$ satisfying appropriate conditions \cite{ar,ck};
(ii) methods based on phase-plane analysis and the
shooting method \cite{et}; (iii) the technique of time
maps \cite{eg}. For $p=2$, $h \equiv 0$, and $\mathbb{T}=\mathbb{R}$, problem
\eqref{eqn1} becomes a well-known boundary-value problem of differential
equations. Our results generalize earlier works
to the case of a generic time scale $\mathbb{T}$, $p\neq 2$,
and $h$ not identically zero. Finally, the third part of our
contribution (\S\ref{sec:mr:part3}) is devoted to the existence
of positive solutions to the $p$-Laplacian dynamic equation
\begin{equation}
\label{equa1}
\begin{gathered}
\left(\phi_{p}(u^{\Delta}(t))\right)^\nabla + \lambda a(t) f(u(t), u(\omega(t)))=0 \, ,
\quad t \in (0,T)_{\mathbb{T}}, \\
u(t) = \psi(t), \quad t \in [-r, 0]_{\mathbb{T}},
\quad u(0)-B_{0}(u^{\Delta}(0))=0, \quad u^{\Delta}(T)=0 \, ,
\end{gathered}
\end{equation}
on a time scale $\mathbb{T}$ such that
$0, T \in \mathbb{T}_\kappa^\kappa$,
$-r \in \mathbb{T}$ with $-r \le 0 < T$,
and where $\lambda > 0$.
This problem is considered in \cite{song} where the authors
apply the Krasnoselskii fixed point theorem to obtain
one positive solution to \eqref{equa1}. Here we use the same conditions as in \cite{song},
but applying Leggett--Williams' theorem we are able to obtain more:
we prove existence of at least three positive solutions.


\section{Preliminaries}
\label{sec:prelim}

Here we just recall the basic concepts and results
needed in the sequel. For an introduction to time scales
the reader is refereed to \cite{a1,abra,a2,a3,b1,b2}
and references therein; for a good introduction to the theory of cones
in Banach spaces we refer the reader to the book \cite{g1}.


\subsection{Time Scales}
\label{subsec:prelim:ts}

A time scale $\mathbb{T}$ is an arbitrary nonempty closed subset
of the real numbers $\mathbb{R}$. The operators $\sigma$ and $\rho$ from $\mathbb{T}$
to $\mathbb{T}$ are defined in \cite{h1,h2} as
$$
\sigma(t)=\inf\{\tau\in\mathbb{T} \ | \ \tau> t\}\in\mathbb{T}, \quad
\rho(t)=\sup\{\tau\in\mathbb{T} \ | \ \tau< t\}\in\mathbb{T} \, ,
$$
and are called the forward jump operator and
the backward jump operator, respectively.
A point $t\in\mathbb{T}$ is
left-dense, left-scattered, right-dense, right-scattered if
$\rho (t)=t,\ \rho(t)<t,\ \sigma(t)=t,\ \sigma(t)>t$, respectively.
If $\mathbb{T}$ has a right scattered minimum $m$, define
$\mathbb{T}_{\kappa}=\mathbb{T}-\{m\}$; otherwise set
$\mathbb{T}_{\kappa}=\mathbb{T}$. If $\mathbb{T}$ has a left scattered
maximum $M$, define $\mathbb{T}^{\kappa}=\mathbb{T}-\{M\}$; otherwise set
$\mathbb{T}^{\kappa}=\mathbb{T}$. Following \cite{malina},
we also introduce the set
$\mathbb{T}_\kappa^\kappa = \mathbb{T}^{\kappa} \cap \mathbb{T}_{\kappa}$.

Let $f:\mathbb{T}\to \mathbb{R}$ and $t\in \mathbb{T}^{\kappa}$ (assume $t$ is
not left-scattered if $t=\sup\mathbb{T}$), then the delta derivative
of $f$ at the point $t$ is defined to be the number $f^{\Delta}(t)$
(provided it exists) with the property that for each $\epsilon>0$
there is a neighborhood $U$ of $t$ such that
$$
|f(\sigma(t))-f(s)-f^{\Delta}(t)(\sigma(t)-s) |\le | \sigma(t)-s|
\quad \mbox{for all } s\in U.
$$
Similarly, for $t\in \mathbb{T}_\kappa$ (assume $t$ is not right-scattered
if $t=\inf\mathbb{T}$),  the nabla derivative of $f$ at the point
$t$ is defined in \cite{a4} to be the number $f^{\nabla}(t)$
(provided it exists) with the property that for each $\epsilon >0$
there is a neighborhood $U$ of $t$ such that
$$
|f(\rho(t))-f(s)-f^{\nabla}(t)(\rho(t)-s) |\le | \rho(t)-s | \quad
\mbox{for all } s\in U.
$$
If $\mathbb{T}=\mathbb{R}$, then $f^\Delta(t)=f^\nabla(t)=f'(t)$. If
$\mathbb{T}=\mathbb{Z}$, then $f^\Delta(t)=f(t+1)-f(t)$ is the forward
difference operator while $f^\nabla(t)=f(t)-f(t-1)$ is the backward
difference operator.

A function $f$ is left-dense continuous (\textrm{i.e.}, $ld$-continuous), if $f$ is
continuous at each left-dense point in $\mathbb{T}$ and its
right-sided limit exists at each right-dense point in $\mathbb{T}$.
If $f$ is $ld$-continuous, then there exists
$F$ such that $F^{\nabla}(t)=f(t)$ for any $t \in \mathbb{T}_\kappa$.
We then introduce the nabla integral by
$$
\int^b_a f(t)\nabla t=F(b)-F(a).
$$
We define right-dense continuous ($rd$-continuous)
functions in a similar way. If $f$
is $rd$-continuous, then there exists $F$ such that
$F^{\Delta}(t)=f(t)$ for any $t \in \mathbb{T}^\kappa$,
and we define the delta integral by
$$
\int^b_a f(t)\Delta t=F(b)-F(a).
$$


\subsection{Cones in Banach Spaces}
\label{subsec:prelim:cones}

In this article $\mathbb{T}$ is a time scale
with $0\in\mathbb{T}_\kappa$ and $T\in\mathbb{T}^\kappa$.
We use $\mathbb{R}^{+}$ and $\mathbb{R}_0^{+}$ to denote, respectively,
the set of positive and nonnegative real numbers.
By $[0, T]_\mathbb{T}$ we denote the set $[0, T] \cap \mathbb{T}$.
Similarly, $(0, T)_\mathbb{T} = (0, T) \cap \mathbb{T}$.
Let $E= \mathbb{C}_{ld}([0, T]_\mathbb{T}, \mathbb{R})$. It follows that
$E$ is a Banach space with the norm $\|u\|= \max_{[0, T]_\mathbb{T}} |u(t)|$.

\begin{definition}
Let $E$ be a real Banach space. A nonempty, closed, convex set
$P \subset E$ is called a cone if it satisfies the following two conditions:
\begin{itemize}
\item[$(i)$] $u \in P$, $\lambda \geq 0$, implies  $\lambda u \in P$;
\item[$(ii)$] $u \in P$, $-u \in P$, implies $u=0$.
\end{itemize}
\end{definition}

Every cone $P \subset E$ induces an ordering in $E$ given by
$$
u \leq v \mbox{ if and only if } v-u \in P.
$$
\begin{definition}
Let $E$ be a real Banach space and  $P \subset E$ be a cone.
A function $\alpha : P \rightarrow \mathbb{R}_0^{+}$ is called
a nonnegative continuous concave functional if $\alpha$ is continuous and
$$
\alpha(tx + (1-t)y) \geq t \alpha(x)+ (1-t) \alpha(y)
$$
for all $x, y \in P$ and $0 \leq t \leq 1$.
\end{definition}

Let $a, b, r > 0$ be constants, $P_{r}= \{ u \in P \ | \  \|u\| < r \}$,
$P(\alpha, a, b)= \{ u \in P \ | \   a \leq \alpha(u), \|u\| < b\}$.
The following fixed point theorem provides the existence of at
least three positive solutions. The origin in $E$ is denoted by $\emptyset$.
The proof of the Leggett--Williams fixed point theorem
can be found in Guo and Lakshmikantham \cite{g1}
or Leggett and Williams \cite{r9}.

\begin{theorem}[Leggett--Williams' theorem]
\label{thm1}
Let $P$ be a cone in a real Banach space $E$. Let $G: \overline{P_{c}}\rightarrow \overline{P_{c}}$
be a completely continuous map and $\alpha$ a nonnegative continuous concave functional on $P$
such that $\alpha(u) \leq \|u\|$ $\forall u \in \overline{P_{c}}$. Suppose there exist
$a, b, d > 0$ with $0 < a < b <d \leq c$ such that
\begin{itemize}
\item[$(i)$] $ \{ u \in P(\alpha, b, d) \ | \  \alpha(u) > b \} \neq \emptyset$
and $ \alpha(Gu) > b$ for all $u \in P(\alpha, b, d)$;

\item[$(ii)$]  $\|Gu\| < a$ for all $u \in \overline{P}_{a}$;

\item[$(iii)$]   $\alpha(Gu) > b$ for all $u \in P(\alpha, b, c)$   with $\|Gu\| > d$.
\end{itemize}
Then $G$ has at least three fixed points $u_{1}$, $u_{2}$ and $u_{3}$ satisfying
$$
\|u_{1}\| < a \, , \quad  b < \alpha(u_{2}) \, , \quad
\|u_{3}\| > a \, , \quad \mbox{ and } \quad \alpha(u_{3}) < b \, .
$$
\end{theorem}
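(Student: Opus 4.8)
The plan is to deduce the result from the theory of the fixed point index $i(G,\cdot,P)$ for completely continuous self-maps of the cone $P$, whose additivity, homotopy invariance, normalization, and solution properties are developed in Guo and Lakshmikantham \cite{g1}. Recall in particular that if a completely continuous map sends a bounded, relatively open, convex set into its closure without boundary fixed points then its index there is $1$, and that a nonzero index on a relatively open set forces a fixed point inside. First I would record the two routine index values. Since $G$ maps the bounded closed convex set $\overline{P_{c}}$ into itself, normalization gives $i(G,P_{c},P)=1$. Next, hypothesis $(ii)$ gives $\|Gu\|<a$ on $\overline{P_{a}}$, so $G$ carries $\overline{P_{a}}$ into $P_{a}$, and the same normalization (or a straight-line homotopy to a constant interior point) yields $i(G,P_{a},P)=1$.

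The heart of the argument, and the step I expect to be the main obstacle, is to prove that $i\bigl(G,P(\alpha,b,c),P\bigr)=1$. Here I would use the nonemptiness clause of $(i)$ to fix a point $u^{*}\in P(\alpha,b,d)$ with $\alpha(u^{*})>b$ (so $\|u^{*}\|<d\le c$), and consider the completely continuous homotopy
$$
H(t,u)=t\,u^{*}+(1-t)\,Gu,\qquad t\in[0,1],\ u\in\overline{P(\alpha,b,c)} .
$$
Convexity of $P$ and the bound $\|Gu\|\le c$ keep $H$ in $\overline{P_{c}}$, and $H(1,\cdot)\equiv u^{*}$ lies in $P(\alpha,b,c)$, so by homotopy invariance it suffices to rule out fixed points of $H(t,\cdot)$ on the relative boundary of $P(\alpha,b,c)$. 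Suppose $u=H(t,u)$ with $u$ on that boundary. For $t>0$ the estimate $\|u\|\le t\|u^{*}\|+(1-t)\|Gu\|<tc+(1-t)c=c$ forces $\|u\|<c$, leaving $\alpha(u)=b$ as the only boundary possibility; concavity of $\alpha$ with $\alpha(u^{*})>b$ then gives $\alpha(Gu)<b$. The delicate point is to contradict this. If $\|u\|<d$, hypothesis $(i)$ already yields $\alpha(Gu)>b$; while if $\|u\|\ge d$, the key observation is that from $u=tu^{*}+(1-t)Gu$ with $\|u^{*}\|<d$ one gets $\|Gu\|=\|u-tu^{*}\|/(1-t)>d(1-t)/(1-t)=d$, so hypothesis $(iii)$ applies and again forces $\alpha(Gu)>b$. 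Either way we reach a contradiction, and a similar inspection disposes of the residual $t=0$ case, so $H$ has no boundary fixed points and the index equals $i(u^{*},P(\alpha,b,c),P)=1$.

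With the three index values in hand I would finish by additivity. Because every $u\in P(\alpha,b,c)$ satisfies $\|u\|\ge\alpha(u)\ge b>a$, the sets $\overline{P_{a}}$ and $\overline{P(\alpha,b,c)}$ are disjoint and both contained in $P_{c}$; hence
$$
i(G,P_{c},P)=i(G,P_{a},P)+i\bigl(G,P(\alpha,b,c),P\bigr)+i(G,R,P),
$$
where $R=P_{c}\setminus\bigl(\overline{P_{a}}\cup\overline{P(\alpha,b,c)}\bigr)$. Substituting the computed values gives $1=1+1+i(G,R,P)$, so $i(G,R,P)=-1\neq0$. The solution property then produces fixed points $u_{1}\in P_{a}$, $u_{2}\in P(\alpha,b,c)$, and $u_{3}\in R$. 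Reading off the defining inequalities of the three regions yields $\|u_{1}\|<a$; the inequality $\alpha(u_{2})\ge b$, which I would upgrade to the strict $b<\alpha(u_{2})$ by applying $(i)$ and $(iii)$ to the identity $u_{2}=Gu_{2}$ exactly as above; and, since $u_{3}\notin\overline{P_{a}}$ while $u_{3}\in P_{c}\setminus\overline{P(\alpha,b,c)}$, both $\|u_{3}\|>a$ and $\alpha(u_{3})<b$. This is precisely the asserted conclusion.
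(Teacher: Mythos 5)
The paper offers no proof of this statement: Theorem~\ref{thm1} is quoted as a known result, with the proof explicitly deferred to Guo and Lakshmikantham \cite{g1} and to Leggett and Williams \cite{r9}. Your index-theoretic argument is, in substance, the proof given in those references: the three index computations, the homotopy $tu^{*}+(1-t)Gu$ with the concavity trick on the level set $\{\alpha(u)=b\}$, and the additivity count $1=1+1+i(G,R,P)$ are all the standard route, and your case split ($\|u\|<d$ versus $\|u\|\ge d$, with the estimate $\|Gu\|\ge(\|u\|-t\|u^{*}\|)/(1-t)>d$) is the correct way to combine hypotheses $(i)$ and $(iii)$. Two technical points need repair before the write-up is airtight. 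First, you compute indices relative to the cone $P$, so $i(G,P_{c},P)$ is undefined if $G$ has a fixed point on $\{u\in P:\|u\|=c\}$, which nothing in the hypotheses excludes ($G$ is only assumed to map $\overline{P_{c}}$ into $\overline{P_{c}}$, not into $P_{c}$); the standard fix is to take $\overline{P_{c}}$ itself as the ambient retract, so that the whole set has empty relative boundary and index $1$ unconditionally, and the additivity identity is then taken over disjoint relatively open subsets of $\overline{P_{c}}$. Second, with the paper's convention $P(\alpha,b,c)=\{u\in P\ |\ b\le\alpha(u),\ \|u\|<c\}$ this set is not relatively open (the condition on $\alpha$ is closed), so the index over it is not literally defined; you should run the homotopy over the relatively open set $\{u\in\overline{P_{c}}\ |\ \alpha(u)>b\}$, whose relative boundary is contained in $\{\alpha(u)=b\}$, where your boundary analysis goes through verbatim. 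The residual cases you wave away (the $t=0$ fixed point with $\|u\|=d$ or $\|u\|=c$, and the upgrade of $\alpha(u_{2})\ge b$ to a strict inequality) are artifacts of the strict-versus-weak inequality conventions and are resolved the same way in \cite{r9}; with these adjustments the proof is correct.
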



\section{Main Results}
\label{sec:mr}

We prove existence
of three positive solutions to different
$p$-Laplacian problems on time scales:
in \S\ref{sec:mr:part1} we study problem
\eqref{eq1}--\eqref{eq2}; in \S\ref{sec:mr:part2}
problem \eqref{eqn1}; and finally \eqref{equa1}
in \S\ref{sec:mr:part3}.


\subsection{Nonlocal Thermistor Problem}
\label{sec:mr:part1}

By a solution $u: \mathbb{T} \rightarrow \mathbb{R}$
of \eqref{eq1}--\eqref{eq2} we mean a delta differentiable
function such that $u^{\Delta}$ and
$\left ( |u^{\Delta}|^{p-2}u^{\Delta} \right )^{\nabla}$
are both continuous on $\mathbb{T}^{\kappa}_{\kappa}$ and
$u$ satisfies \eqref{eq1}--\eqref{eq2}.
We consider the following hypothesis:
\begin{itemize}
\item[(H1)] $f: \mathbb{R} \rightarrow \mathbb{R}^{+}$ is a
continuous function.
\end{itemize}

\begin{lemma}[Lemma~3.1 of \cite{jipam}]
\label{lm1}
Assume that hypothesis $(H1)$ on function $f$ is satisfied.
Then $u$ is a solution to \eqref{eq1}--\eqref{eq2} if and only if
$u \in E$ is a solution to the integral equation
$$
u(t)= -\int_{0}^{t}\phi_{q}\left (g(s) \right) \Delta s + B,
$$
where
\begin{equation*}
\begin{gathered}
g(s)= \int_{0}^{s}\lambda h(u(r)) \nabla r -A,\\
 A= \phi_{p}(u^{\Delta}(0))= -\frac{\lambda \beta}{1-\beta}\int_{0}^{\eta}
h(u(r))
\nabla r, \\
h(u(t))= \frac{\lambda f(u(t))}{( \int_{0}^{T} f(u(\tau ))\, \nabla
\tau )^{2}},\\
B=u(0)= \frac{1}{1-\beta} \left \{
\int_{0}^{T}\phi_{q}(g(s))\Delta s - \beta \int_{0}^{\eta}
\phi_{q}(g(s)) \Delta s \right  \}.
\end{gathered}
\end{equation*}
\end{lemma}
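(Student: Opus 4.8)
The plan is to prove the equivalence by the classical device of integrating the dynamic equation twice and then fixing the two constants of integration through the boundary conditions; the converse follows by differentiating back. Throughout, write $R(t)$ for the right-hand side of \eqref{eq1}, so that in the notation of the statement $g(s)=\int_{0}^{s}R(r)\,\nabla r - A$. Hypothesis $(H1)$ is what makes everything go: since $f$ is continuous and the nonlocal factor $\left(\int_{0}^{T}f(u(\tau))\,\nabla\tau\right)^{-2}$ is a fixed positive constant once $u$ is given, the map $t\mapsto R(t)$ is continuous, hence $ld$-continuous, so every nabla integral below is well defined and the fundamental theorem of nabla calculus applies to it.

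For the forward implication, suppose $u$ solves \eqref{eq1}--\eqref{eq2}. Rewriting \eqref{eq1} as $\left(\phi_{p}(u^{\Delta}(t))\right)^{\nabla}=-R(t)$ and applying the nabla integral from $0$ to $s$ gives
$$
\phi_{p}(u^{\Delta}(s))=\phi_{p}(u^{\Delta}(0))-\int_{0}^{s}R(r)\,\nabla r=-g(s),
$$
where $A:=\phi_{p}(u^{\Delta}(0))$. Since $\phi_{q}=\phi_{p}^{-1}$ is odd, inverting yields $u^{\Delta}(s)=-\phi_{q}(g(s))$, and a delta integration from $0$ to $t$ produces $u(t)=B-\int_{0}^{t}\phi_{q}(g(s))\,\Delta s$ with $B:=u(0)$. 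It remains to evaluate the two constants. Inserting $\phi_{p}(u^{\Delta}(\eta))=-g(\eta)=A-\int_{0}^{\eta}R(r)\,\nabla r$ into the first boundary condition of \eqref{eq2} and dividing by $1-\beta$ (nonzero because $0<\beta<1$) gives the stated formula for $A$; inserting $u(T)$ and $u(\eta)$ from the integral representation into the second boundary condition and again dividing by $1-\beta$ gives the stated formula for $B$.

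For the converse, suppose $u\in E$ satisfies the integral equation. Taking the delta derivative gives $u^{\Delta}(t)=-\phi_{q}(g(t))$, hence $\phi_{p}(u^{\Delta}(t))=-g(t)$ by the oddness of $\phi_{p}$ and the identity $\phi_{p}\circ\phi_{q}=\mathrm{id}$; taking the nabla derivative and using $g^{\nabla}(t)=R(t)$ recovers \eqref{eq1}. The explicit expressions for $A$ and $B$ are then exactly what is needed to verify, by running the algebra above in reverse, both boundary conditions in \eqref{eq2}. I expect no genuine analytic obstacle here: the only care required is bookkeeping, namely keeping the interplay between $\Delta$- and $\nabla$-differentiation and integration consistent and checking at each step that $(H1)$ supplies enough regularity for the fundamental theorems of time-scale calculus to be invoked.
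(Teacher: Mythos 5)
Your argument is correct and is the standard one; note that the paper itself supplies no proof of this lemma, importing it verbatim as Lemma~3.1 of \cite{jipam}, and your two integrations of \eqref{eq1} with the constants $A$ and $B$ then pinned down by the two conditions in \eqref{eq2} is exactly the computation that reference performs. One bookkeeping remark: your identification $g(s)=\int_{0}^{s}R(r)\,\nabla r-A$ with $R$ the right-hand side of \eqref{eq1} is the internally consistent reading, whereas the statement as printed sets $g(s)=\int_{0}^{s}\lambda h(u(r))\,\nabla r-A$ with $h$ already equal to that right-hand side, so it carries a spurious extra factor of $\lambda$; your derivation silently uses the consistent version, which is the right thing to do.
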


\begin{lemma}
Suppose $(H1)$ holds. Then a solution $u$ to \eqref{eq1}--\eqref{eq2}
satisfies $u(t) \geq 0$ for $t \in (0, T)_\mathbb{T}$.
\end{lemma}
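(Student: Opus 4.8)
The plan is to use the integral representation of the solution from Lemma~\ref{lm1} together with the positivity of $f$ guaranteed by $(H1)$, and to analyze the sign of each ingredient appearing in the formula for $u(t)$.

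First I would observe that hypothesis $(H1)$ forces $f(u(\tau)) > 0$, hence the kernel $h(u(t)) = \lambda f(u(t)) / \left( \int_0^T f(u(\tau))\,\nabla\tau \right)^2$ is nonnegative for every $t$, since $\lambda > 0$. Consequently the function $A = -\frac{\lambda\beta}{1-\beta}\int_0^\eta h(u(r))\,\nabla r$ is nonpositive, using that $0 < \beta < 1$ gives $\frac{\lambda\beta}{1-\beta} > 0$. Then $g(s) = \int_0^s \lambda h(u(r))\,\nabla r - A$ is a sum of a nonnegative integral and the nonnegative quantity $-A$, so $g(s) \ge 0$ for all $s \in [0,T]_\mathbb{T}$. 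Since $\phi_q$ is increasing and $\phi_q(0) = 0$, we get $\phi_q(g(s)) \ge 0$ throughout.

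Next I would estimate $u(t) = -\int_0^t \phi_q(g(s))\,\Delta s + B$ using the explicit value of $B$. Writing $B = \frac{1}{1-\beta}\left\{ \int_0^T \phi_q(g(s))\,\Delta s - \beta \int_0^\eta \phi_q(g(s))\,\Delta s \right\}$, the goal is to show $u(t) \ge 0$ for $t \in (0,T)_\mathbb{T}$. The cleanest route is to bound $u(t)$ below by its value at the right endpoint or by a convexity/monotonicity argument: since $\phi_q(g(s)) \ge 0$, the map $t \mapsto -\int_0^t \phi_q(g(s))\,\Delta s$ is nonincreasing, so $u(t) \ge u(T) = -\int_0^T \phi_q(g(s))\,\Delta s + B$ for $t \le T$. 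Substituting the formula for $B$ and simplifying, $u(T)$ reduces to $\frac{1}{1-\beta}\left\{ (1-\beta)\cdot 0 \right\}$-type combinations; more precisely one computes $u(T) = \frac{\beta}{1-\beta}\left\{ \int_0^T \phi_q(g(s))\,\Delta s - \int_0^\eta \phi_q(g(s))\,\Delta s\right\} = \frac{\beta}{1-\beta}\int_\eta^T \phi_q(g(s))\,\Delta s \ge 0$, where the last inequality uses $\eta < T$, the nonnegativity of the integrand, and $\frac{\beta}{1-\beta} > 0$.

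The main obstacle I anticipate is the bookkeeping in the endpoint computation: correctly canceling the $B$ term against the full integral $\int_0^t \phi_q(g(s))\,\Delta s$ and verifying that the residual is a nonnegative multiple of an integral of a nonnegative function over $[\eta, T]_\mathbb{T}$. One must be careful that the delta integral over $[\eta, T]_\mathbb{T}$ of a nonnegative $ld$-continuous integrand is itself nonnegative, which follows from the monotonicity of the antiderivative; this is standard but should be stated. Everything else is a direct sign analysis, so the proof is essentially a careful assembly of the nonnegativity of $h$, the constraint $0 < \beta < 1$, the monotonicity of $\phi_q$, and the ordering $\eta < T$.
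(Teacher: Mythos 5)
Your proposal is correct and follows essentially the same route as the paper: nonnegativity of $h$, hence $A\le 0$ and $g\ge 0$, then monotonicity of $t\mapsto -\int_0^t\phi_q(g(s))\,\Delta s$ to reduce to the endpoint, and finally the cancellation showing $u(T)=\frac{\beta}{1-\beta}\int_\eta^T\phi_q(g(s))\,\Delta s\ge 0$. The paper additionally records $u(0)=B\ge 0$, but that step is not needed for the stated conclusion, so your streamlined version is fine.
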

\begin{proof} We have $A=\frac{-\lambda \beta}{1- \beta}
\int_{0}^{\eta}h(u(r)) \nabla r \leq 0 $. Then, $g(s)= \lambda
\int_{0}^{s} h(u(r))-A \geq 0$. It follows that $\phi_{p}(g(s)) \geq
0$. Since $0 < \beta < 1$, we also have
\begin{equation*}
\begin{split}
u(0)&= B \\
&= \frac{1}{1-\beta} \left \{ \int_{0}^{T}\phi_{q}(g(s))\Delta s
- \beta \int_{0}^{\eta} \phi_{q}(g(s)) \Delta s \right \}\\
& \geq \frac{1}{1-\beta} \left \{ \beta
\int_{0}^{T}\phi_{q}(g(s))\Delta s - \beta \int_{0}^{\eta}
\phi_{q}(g(s))\Delta s  \right \}\\
&\geq 0
\end{split}
\end{equation*}
and
\begin{equation*}
\begin{split}
u(T)&= u(0)-  \int_{0}^{T}\phi_{q}(g(s))\Delta s\\
&= \frac{-\beta}{1-\beta} \int_{0}^{\eta}\phi_{q}(g(s))\Delta s +
\frac{1}{1-\beta}\int_{0}^{T}\phi_{q}(g(s))\Delta s -
\int_{0}^{T}\phi_{q}(g(s))\Delta s\\
 & = \frac{-\beta}{1-\beta}
\int_{0}^{\eta}\phi_{q}(g(s))\Delta s + \frac{\beta}{1-\beta}
\int_{0}^{T} \phi_{q}(g(s))  \Delta s\\
& = \frac{\beta}{1-\beta}   \left \{
\int_{0}^{T}\phi_{q}(g(s))\Delta s-
\int_{0}^{\eta}\phi_{q}(g(s))\Delta s \right \}\\
&\geq 0.
\end{split}
\end{equation*}
If $t \in (0, T)_{\mathbb{T}}$, then
\begin{equation*}
\begin{split}
u(t)&= u(0)-  \int_{0}^{t}\phi_{q}(g(s))\Delta s\\
& \geq -  \int_{0}^{T}\phi_{q}(g(s))\Delta s + u(0)= u(T) \\
& \geq 0.
\end{split}
\end{equation*}
Consequently, $u(t) \geq 0$ for $t \in (0, T)_{\mathbb{T}}$.
\end{proof}

On the other hand, we have $\phi_{p}(u^{\Delta}(s))=
\phi_{p}(u^{\Delta}(0)) - \int_{0}^{s}\lambda h(u(r)) \nabla r
\leq 0$. Since $A= \phi_{p}(u^{\Delta}(0)) \leq 0$, then
$u^{\Delta} \leq 0$. This means that $||u|| = u(0)$,
$\inf_{t \in (0, T)_{\mathbb{T}}}u(t) = u(T)$. Moreover,
$\phi_{p}(u^{\Delta}(s))$ is non increasing, which implies with
the monotonicity of $\phi_{p}$ that $u^{\Delta}$ is a non
increasing function on $(0, T)_\mathbb{T}$. Hence,
$u$ is concave. In order to apply Theorem~\ref{thm1},
let us define the cone $P \subset E$ by
$$
P= \{ u \in E \ |\  u \mbox{ is nonnegative, decreasing on }
[0, T]_\mathbb{T} \mbox{ and } concave \mbox{ on } E \}\, .
$$
We also define the nonnegative continuous concave functional
$\alpha: P \rightarrow \mathbb{R}_{0}^{+}$ by
$$
\alpha(u) = \min_{t \in [\xi, T-\xi]_\mathbb{T}} u(t), \, \quad
\xi \in \left(0, \frac{T}{2}\right), \, \forall u \in P \, .
$$
It is easy to see that \eqref{eq1}--\eqref{eq2} has a solution
$u=u(t)$ if and only if $u$ is a fixed point of the operator $G: P
\rightarrow E$ defined by
\begin{equation}
\label{eq3}
Gu(t)=  -\int_{0}^{t}\phi_{q}\left (g(s) \right) \Delta s + B,
\end{equation}
where $g$ and $B$ are as in Lemma~\ref{lm1}.

\begin{lemma}\label{lm32}
Let $G$ be defined by \eqref{eq3}. Then,
\begin{itemize}
\item[$(i)$] $G(P) \subseteq P$;
\item[$(ii)$] $G: P \rightarrow P$ is completely continuous.
\end{itemize}
\end{lemma}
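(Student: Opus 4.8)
The plan is to verify the two assertions separately. For part $(i)$, I fix $u \in P$ and check that $Gu$ satisfies the three defining properties of the cone. Since $(H1)$ gives $f > 0$, the nonlocal kernel $h(u(\cdot))$ is nonnegative, so $A \le 0$ and $g(s) \ge 0$, whence $\phi_q(g(s)) \ge 0$ for every $s$. Nonnegativity of $Gu$ on $[0,T]_\mathbb{T}$ then follows from exactly the computation of the preceding lemma, which uses only $\phi_q(g) \ge 0$ together with $0 < \beta < 1$ and is therefore valid for $Gu$ with an arbitrary $u \in P$, not merely for a fixed point. Differentiating \eqref{eq3} and applying the fundamental theorem of delta calculus gives $(Gu)^\Delta(t) = -\phi_q(g(t)) \le 0$, so $Gu$ is decreasing; and since $g$ is non-decreasing (its nabla derivative equals $\lambda h(u(\cdot)) \ge 0$) while $\phi_q$ is increasing, $\phi_q(g)$ is non-decreasing, hence $(Gu)^\Delta$ is non-increasing and $Gu$ is concave. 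This gives $Gu \in P$.

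For part $(ii)$ I would split complete continuity into continuity and compactness. Continuity of $G$ follows from the continuity of $f$ and of $\phi_q$: if $u_n \to u$ in $E$, then $h(u_n(\cdot)) \to h(u(\cdot))$ uniformly, whence $g_n \to g$ and $\phi_q(g_n) \to \phi_q(g)$ uniformly, and finally $Gu_n \to Gu$. For compactness I take a bounded set $\Omega \subset P$ with $\|u\| \le M$ for all $u \in \Omega$, so that every such $u$ takes values in $[0,M]$; on this compact range $f$ is bounded above and below by positive constants, which bounds $h$, hence $g$, hence $\phi_q(g)$ by a constant $L$ independent of $u \in \Omega$. This yields a uniform bound on $\|Gu\|$ and, through the estimate $|Gu(t_1) - Gu(t_2)| \le L\,|t_1 - t_2|$, equicontinuity of $G(\Omega)$. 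An application of the Arzel\`a--Ascoli theorem on time scales then shows that $G(\Omega)$ is relatively compact, and together with continuity this proves that $G$ is completely continuous.

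The sign and monotonicity checks in $(i)$ are routine. The delicate point is the control of the nonlocal term in $(ii)$: one must keep the denominator $\left(\int_0^T f(u(\tau))\,\nabla\tau\right)^2$ bounded away from zero, uniformly over $\Omega$, which is precisely where the strict positivity of $f$ in $(H1)$ is essential. Once that is secured, both the uniform bound and the equicontinuity follow, and the Arzel\`a--Ascoli argument closes the proof.
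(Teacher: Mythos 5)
Your proposal is correct and follows essentially the same route as the paper: part $(i)$ by the sign, monotonicity and concavity checks already carried out for solutions (which apply verbatim to $Gu$ for arbitrary $u\in P$), and part $(ii)$ by a uniform bound on $G(D)$ for bounded $D$ plus the Lipschitz-type equicontinuity estimate $|Gu(t_2)-Gu(t_1)|\le c\,|t_2-t_1|$ followed by Arzel\`a--Ascoli. You are in fact slightly more explicit than the paper on two points it leaves implicit --- the continuity of $G$ itself and the need to bound the denominator $\bigl(\int_0^T f(u(\tau))\,\nabla\tau\bigr)^2$ away from zero via the positive minimum of $f$ on a compact range --- but these are refinements of the same argument, not a different one.
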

\begin{proof}
\begin{itemize}
\item[$(i)$] holds clearly from  above.
\item[$(ii)$] Suppose that $D \subseteq P$ is a bounded set and let $u \in D$. Then,
\begin{equation*}
\begin{split}
|Gu(t)| &= \left|-\int_{0}^{t}\phi_{q}\left (g(s) \right) \Delta s + B\right|\\
&\le \left|-\int_{0}^{t}\phi_{q} \left( \int_{0}^{s} \frac{\lambda
f(u(r))}{( \int_{0}^{T} f(u(\tau ))\, \nabla \tau )^{2}}\nabla r-A
\right)  \Delta s\right| + |B|\\
 & \leq \int_{0}^{T}\phi_{q}\left( \int_{0}^{s}\frac{\lambda \sup_{u
\in D} f(u)}{(T\inf_{u\in D}f(u))^{2}} \, \nabla r -A \right)\Delta
s + |B|\, ;
\end{split}
\end{equation*}
\end{itemize}
\begin{equation*}
\begin{split}
|A| &= \left|\frac{\lambda \beta}{1-\beta}\int_{0}^{\eta} h(u(r)) \nabla r\right| \\
& = \left|\frac{\lambda \beta}{1-\beta}\int_{0}^{\eta} \frac{
f(u(r))}{( \int_{0}^{T} f(u(\tau ))\, \nabla r )^{2}} \nabla r\right| \\
& \leq \frac{\lambda \beta}{1-\beta} \frac{ \sup_{u \in D}
f(u)}{(T\inf_{u\in D}f(u))^{2}} \, \, \eta.
\end{split}
\end{equation*}
In the same way, we have
\begin{equation*}
\begin{split}
|B| &\leq  \frac{1}{1-\beta} \int_{0}^{T} \phi_{q}(g(s))
\Delta s \\
& \leq  \frac{1}{1-\beta}\int_{0}^{T} \phi_{q}\left( \frac{\lambda
\sup_{u \in D}f(u)}{(T\inf_{u\in D}f(u))^{2}}\left(s+ \frac{\beta}{1-\beta}
\eta\right) \right) \Delta s .
\end{split}
\end{equation*}
It follows that
$$
|Gu(t)| \leq \int_{0}^{T} \phi_{q} \left ( \frac{\lambda \sup_{u \in D}
f(u)}{\left(T\inf_{u\in D}f(u)\right)^{2}}\left(s
+\frac{\beta \eta}{1-\beta}\right) \right) \Delta s +|B|.
$$
As a consequence, we get
\begin{equation*}
\begin{split}
\| Gu  \| & \leq \frac{2- \beta}{1-\beta}\int_{0}^{T} \phi_{q} \left
( \frac{\lambda \sup_{u \in D} f(u)}{(T\inf_{u\in
D}f(u))^{2}}\left(s+\frac{\beta \eta}{1-\beta}\right) \right)\\
 & \leq  \frac{2}{1-\beta}\phi_{q} \left( \frac{\lambda
\sup_{u \in D} f(u)}{(T\inf_{u\in D}f(u))^{2}} \right)\int_{0}^{T}
 \phi_{q} \left(s+\frac{\beta \eta}{1-\beta}\right)  \Delta s.
\end{split}
\end{equation*}
Then $G(D)$ is bounded on the whole bounded set $D$. Moreover,
if $t_{1}, t_{2} \in [0, T]_\mathbb{T}$ and $u \in D$, then
we have for a positive constant $c$ that
$$
|Gu(t_{2})-Gu(t_{1}))| \leq
\left|\int_{t_{1}}^{t_{2}}\phi_{q}(g(s)) \Delta s\right|
\leq c |t_{2}-t_{1}|.
$$
We see that the right hand side of the above inequality
goes uniformly to zero when $|t_{2}-t_{1}| \rightarrow 0$.
Then by a standard application of the Arzela--Ascoli theorem
we have that $G: P\rightarrow P$ is completely continuous.
\end{proof}
We can also easily obtain the following  properties:
\begin{lemma}
\begin{itemize}
\item[$(i)$] $\alpha(u) = u(T-\xi )  \leq \|u\|$  for all $u \in P$;
\item[$(ii)$] $\alpha(Gu)=(Gu)(T-\xi)$;
\item[$(iii)$]$ \|Gu\|= Gu(0)$.
\end{itemize}
\end{lemma}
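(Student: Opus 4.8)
The plan is to observe that all three assertions rest on a single structural fact: every member of the cone $P$ is nonnegative and decreasing on $[0,T]_{\mathbb{T}}$. Once this is isolated, each item is immediate, with items (ii) and (iii) obtained by feeding $Gu$ into the same argument via the inclusion $G(P)\subseteq P$ established in Lemma~\ref{lm32}.

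First I would record the general principle. If $v\in P$, then $v$ is decreasing on $[0,T]_{\mathbb{T}}$, so its maximum over $[0,T]_{\mathbb{T}}$ is attained at the left endpoint $t=0$; since $v\ge 0$ we have $|v(t)|=v(t)$ and hence $\|v\|=\max_{[0,T]_{\mathbb{T}}}|v(t)|=v(0)$. For the same monotonicity reason, the minimum of $v$ over the compressed subinterval $[\xi,T-\xi]_{\mathbb{T}}$ is attained at the right endpoint, giving $\min_{t\in[\xi,T-\xi]_{\mathbb{T}}}v(t)=v(T-\xi)$. Applying this principle to $v=u$ proves (i): one reads off $\alpha(u)=u(T-\xi)$ from the minimum, and $u(T-\xi)\le u(0)=\|u\|$ follows because $u$ is nonnegative and decreasing.

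For (ii) and (iii) I would invoke Lemma~\ref{lm32}(i), which guarantees $Gu\in P$ for every $u\in P$. Applying the recorded principle to $v=Gu$ then yields $\alpha(Gu)=(Gu)(T-\xi)$ and $\|Gu\|=(Gu)(0)$, which are exactly the claimed identities. Thus the entire lemma reduces to the monotone and nonnegative structure of the cone, together with the fact that $G$ maps $P$ into itself.

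There is no genuine obstacle here; the statement is essentially a bookkeeping consequence of the properties of $P$ derived in the discussion preceding its definition. The only point meriting care is that $\alpha$ is defined as a minimum over the shrunken interval $[\xi,T-\xi]_{\mathbb{T}}$ whereas $\|\cdot\|$ is a maximum over the full interval $[0,T]_{\mathbb{T}}$, so one must check that decreasingness places each extremum at the correct endpoint, namely the left endpoint $t=0$ for the norm and the right endpoint $t=T-\xi$ for $\alpha$. Both assignments follow directly from the sign of $u^{\Delta}$.
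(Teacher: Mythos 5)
Your proof is correct and is exactly the argument the paper intends: the paper states this lemma without proof (introducing it with ``we can also easily obtain''), and the justification is precisely the one you give, namely that elements of $P$ are nonnegative and decreasing so the norm is attained at $t=0$ and the minimum over $[\xi,T-\xi]_{\mathbb{T}}$ at $t=T-\xi$, with (ii) and (iii) following from $G(P)\subseteq P$. No gaps.
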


We now state the main result of \S\ref{sec:mr:part1}.
\begin{theorem}
\label{thm:mr:secOrig1}
Suppose that $(H1)$ is verified and there exists
positive constants $a$, $b$, $c$, $d$ such that
$0 < \zeta a=a_{1} < b < d- T^{2} c \phi_{q}\left(\frac{1}{T}\right)
< d <\zeta c=c_{1}$, with
$\zeta = \frac{T^{2}}{1-\beta}\phi_{q}\left(\frac{1}{T}\right)$.
We further impose $f$ to satisfy the following hypotheses:
\begin{itemize}
\item[(H2)]$ \min_{0\leq u \leq a_{1}} f(u) \geq \frac{\lambda^{2}}{T(1-\beta)
\phi_{p}(a)}$ uniformly for all $t \in [0, T]_\mathbb{T}$;

\item[(H3)] $ \min_{0\leq u \leq c_{1}} f(u)
\geq \frac{\lambda^{2}}{T(1-\beta) \phi_{p}(c)}$
uniformly for all $t \in [0, T]_\mathbb{T}$;

\item[(H4)]$\min f(u)_{b \leq u \leq d } \geq \phi_{p}(bB_{1})$
uniformly for all $t \in [0, T]_\mathbb{T}$,  where
$$ B_{1}=\frac{(1- \beta)}{\beta \xi} |\phi_{p}(T-\xi)| \phi_{p}
\left(\frac{\lambda}{(T \sup_{b \leq u \leq d} f(u))^{2}} \right).
$$
\end{itemize}
Then the boundary value problem \eqref{eq1}--\eqref{eq2}
has at least three positive solutions
$u_{1}$, $u_{2}$, and $u_{3}$, verifying
$$ \|u_{1}\| < a\, , \quad
b < \alpha(u_{2})\, , \quad
\|u_{3}\| > a \, , \quad \mbox{ and } \,  \alpha(u_{3}) < b.$$
\end{theorem}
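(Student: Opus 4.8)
The plan is to verify the three hypotheses of the Leggett--Williams theorem (Theorem~\ref{thm1}) for the operator $G$ of \eqref{eq3} on the cone $P$, with the concave functional $\alpha$, using the constants ordered by $0<\zeta a=a_1<b<d-T^2c\phi_q(1/T)<d<\zeta c=c_1$. From Lemma~\ref{lm32} we already know that $G$ is completely continuous with $G(P)\subseteq P$, and from the properties lemma that $\|Gu\|=Gu(0)$ and $\alpha(Gu)=Gu(T-\xi)$. The single structural fact driving every estimate is that, by Lemma~\ref{lm1}, $Gu(t)=B-\int_0^t\phi_q(g(s))\,\Delta s$ is nonincreasing, so that $\alpha(Gu)=\|Gu\|-\int_0^{T-\xi}\phi_q(g(s))\,\Delta s$; and that the nonlocal denominator produces the cancellation $\int_0^s h(u(r))\,\nabla r=\lambda\big(\int_0^s f(u)\,\nabla r\big)\big/(\int_0^T f(u)\,\nabla\tau)^2\le \lambda/\int_0^T f(u)\,\nabla\tau$, so that a \emph{lower} bound on $f$ bounds $g$ (hence $\phi_q(g)$ and $\|Gu\|$) from \emph{above}.

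First I would check that $G$ maps $\overline{P_{c}}$ into itself. For $u$ with $\|u\|\le c$ one has $0\le u(t)\le c$, so hypothesis (H3) gives a lower bound on $\int_0^T f(u)$; feeding this through the cancellation above yields $g(s)\le\phi_p(c)$, whence $\phi_q(g(s))\le c$ and, after the bookkeeping that fixes the normalising factor $\zeta=\frac{T^2}{1-\beta}\phi_q(1/T)$, $\|Gu\|\le c$. Combined with $G(P)\subseteq P$ this gives $G:\overline{P_{c}}\to\overline{P_{c}}$. The verification of condition $(ii)$, namely $\|Gu\|<a$ for all $u\in\overline{P}_{a}$, is the same upper-bound estimate carried out over the range $[0,a_1]$ using (H2) in place of (H3); the constant $a_1=\zeta a$ is precisely what this estimate returns.

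Next I would dispatch condition $(iii)$, the cheapest once the representation above is in hand. For $u\in P(\alpha,b,c)$ we have $0\le u\le c$, so (H3) bounds $\int_0^{T-\xi}\phi_q(g(s))\,\Delta s\le\int_0^T\phi_q(g(s))\,\Delta s\le T^2c\,\phi_q(1/T)$. Hence, whenever $\|Gu\|>d$,
\[
\alpha(Gu)=\|Gu\|-\int_0^{T-\xi}\phi_q(g(s))\,\Delta s> d-T^2c\,\phi_q\!\left(\tfrac1T\right)>b,
\]
the last inequality being exactly the gap hypothesis $b<d-T^2c\phi_q(1/T)$. For condition $(i)$ I would first note that the constant function $u\equiv\frac{b+d}{2}$ lies in $P(\alpha,b,d)$ with $\alpha(u)=\frac{b+d}{2}>b$, so that set is nonempty. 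Then, for arbitrary $u\in P(\alpha,b,d)$ one has $b\le u(t)\le d$ on $[\xi,T-\xi]_{\mathbb{T}}$, and the monotonicity of $Gu$ gives $\alpha(Gu)=Gu(T-\xi)\ge Gu(T)=\frac{\beta}{1-\beta}\int_\eta^T\phi_q(g(s))\,\Delta s$. Inserting the floor $f(u)\ge\phi_p(bB_1)$ from (H4) into the numerator of $h$ and the ceiling $\int_0^T f(u)\le T\sup_{[b,d]}f$ into its denominator, the definition of $B_1$ is calibrated so that this chain returns exactly $\alpha(Gu)>b$.

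The main obstacle is condition $(i)$: unlike the upper estimates, it demands a lower bound on $\alpha(Gu)$, so one cannot exploit the favourable nonlocal cancellation and must instead control the numerator and denominator of $h=\lambda f/(\int_0^T f)^2$ \emph{simultaneously} --- the pointwise floor $f\ge\phi_p(bB_1)$ against the global ceiling $\int_0^T f\le T\sup_{[b,d]}f$ --- while correctly unwinding the $\phi_p/\phi_q$ conjugacy and the interval lengths $\eta$, $\xi$, $T-\xi$ through $\phi_q$. Verifying that the elaborate constant $B_1$ indeed reproduces the threshold $b$, and checking along the way that the calibrated constants $a_1=\zeta a$ and $c_1=\zeta c$ are consistent with the Leggett--Williams ordering $a<b<d\le c$, is where the real work lies; once all three conditions hold, Theorem~\ref{thm1} yields the three positive solutions with the asserted bounds.
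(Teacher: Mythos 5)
Your proposal is correct and follows essentially the same route as the paper: complete continuity of $G$ plus the invariance $G\overline{P_{a_{1}}}\subset\overline{P_{a_{1}}}$, $G\overline{P_{c_{1}}}\subset\overline{P_{c_{1}}}$ via (H2)--(H3) and the nonlocal cancellation $\int_0^T h\le\lambda/\int_0^T f$, condition $(iii)$ via the gap $b<d-T^{2}c\,\phi_q(1/T)$, and condition $(i)$ via (H4) and the calibrated constant $B_{1}$, exactly as in the paper's Lemmas. The only (cosmetic) divergence is in condition $(i)$, where you lower-bound $\alpha(Gu)$ by $Gu(T)=\frac{\beta}{1-\beta}\int_{\eta}^{T}\phi_q(g(s))\,\Delta s$ while the paper retains $\frac{\beta}{1-\beta}\int_{T-\xi}^{T}\phi_q(g(s))\,\Delta s$, which is the interval the factors $\xi$ and $\phi_p(T-\xi)$ in $B_{1}$ are actually tuned to.
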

\begin{proof}
The proof passes by several lemmas.
We have already seen in Lemma~\ref{lm32}
that the operator $G$ is completely continuous.
We now show that
\begin{lemma}
$$
G\overline{P_{c_{1}}} \subset \overline{P_{c_{1}}}, \, \,
G\overline{P_{a_{1}}} \subset \overline{P_{a_{1}}}.
$$
\end{lemma}

\begin{proof}
Obviously, $G\overline{P_{a_{1}}} \subset P$.
Moreover, $\forall u \in \overline{P_{a_{1}}}$,
we have $0 \leq u(t) \leq a_{1}$.
On the other hand we have
$$
Gu(t)= -\int_{0}^{t}\phi_{q}\left (g(s) \right) \Delta s
+ \frac{1}{1-\beta} \int_{0}^{T}\phi_{q}\left (g(s) \right) \Delta s
-\frac{\beta}{1-\beta} \int_{0}^{\eta}\phi_{q}\left (g(s) \right) \Delta s \, ;
$$
and $\forall u \in G\overline{P_{a_{1}}}$
we have $0 \leq  u(t) \leq a_{1}$. Then,
$$
|Gu| \leq \frac{1}{1-\beta} \int_{0}^{T}\phi_{q}\left (g(s) \right) \Delta s \, .
$$
We have
$$
h(u(t))= \frac{\lambda f(u(t))}{( \int_{0}^{T} f(u(\tau ))\, \nabla \tau )^{2}}
$$
and
\begin{equation*}
\begin{split}
g(s)& = \int_{0}^{s}\lambda h(u(r)) \nabla r
+ \frac{\lambda \beta}{1-\beta}\int_{0}^{\eta}  h(u(r)) \nabla r\\
& \leq \left ( \lambda + \frac{\lambda \beta}{1-\beta}
\int_{0}^{T} h(u(r)) \nabla r  \right )\\
& \leq  \frac{\lambda}{1-\beta}\int_{0}^{T}  h(u(r)) \nabla r \, .
\end{split}
\end{equation*}
Using $(H2)$ it follows that
\begin{equation*}
\phi_{q}(g(s)) \leq  aT \phi_{q} \left(\frac{1}{T}\right).
\end{equation*}
Then we get
$$
|Gu| \leq a_{1} \mbox{ and } G\overline{P_{a_{1}}}
\subset \overline{P_{a_{1}}}\, .
$$
Similarly, using $(H3)$ we get $G\overline{P_{c_{1}}}
\subset \overline{P_{c_{1}}}$.
\end{proof}

\begin{lemma}
\label{lm37}
The set
$$
\{ u \in P(\alpha, b, d) \ | \  \alpha(u) > b \}
$$
is nonempty, and
$$
\alpha(Gu) > b,
\mbox{ if } u \in P(\alpha, b, d) \, .
$$
\end{lemma}

\begin{proof}
Let $u= \frac{b+d}{2}$. Then, $u \in P, \|u\|=\frac{b+d}{2}\leq d$
and $\alpha(u) \geq \frac{b+d}{2} > b$. The first part of the lemma is proved.
For $u \in P(\alpha, b, d)$ we have $b \leq u \leq d$.
If $t \in [\xi, T]_\mathbb{T}$, then
\begin{equation*}
\begin{split}
\alpha(Gu)&= (Gu)(T-\xi)\\
&=  -\int_{0}^{T-\xi}\phi_{q}\left (g(s) \right) \Delta s + B\\
&\geq \frac{\beta}{1-\beta} \int_{T-\xi}^{T}\phi_{q}\left (g(s) \right) \Delta s.
\end{split}
\end{equation*}
Since $A \leq 0$, we have by using $(H4)$
\begin{equation*}
\begin{split}
g(s) & = \lambda \int_{0}^{s} h(u(r)) \nabla r -A \\
&\geq \lambda \int_{0}^{s} h(u(r)) \nabla r \\
& \geq \lambda \int_{0}^{s} \frac{f(u)}{(T \sup_{b \leq u \leq
d}f(u))^{2}}\\
& \geq \lambda  \frac{(bB_{1})^{p-1}}{(T \sup_{b \leq u \leq
d}f(u))^{2}} s.
\end{split}
\end{equation*}
Using the fact that $\phi_{q}$ is nondecreasing we get
\begin{equation*}
\begin{split}
\phi_{q}(g(s))&  \geq  \phi_{q} \left ( \lambda
\frac{(bB_{1})^{p-1}}{(T \sup_{b \leq u \leq d}f(u))^{2}} s \right )\\
& \geq bB_{1} \phi_{q} \left(\frac{\lambda}{(T \sup_{b \leq u \leq d} f(u))^{2}}\right)
\phi_{q}(s).
\end{split}
\end{equation*}
Using the expression of $B_{1}$
\begin{equation*}
\begin{split}
\alpha(Gu)& \geq \frac{\beta}{1-\beta} bB_{1} \phi_{q}
\left(\frac{\lambda}{(T \sup_{b \leq u \leq d} f(u))^{2}}\right)
\int_{T-\xi}^{T} \phi_{q}(s)\Delta s\\
& \geq   bB_{1}\frac{\beta}{1-\beta} \phi_{q} \left(\frac{\lambda}{(T \sup_{b
\leq u \leq d} f(u))^{2}}\right) \phi_{q}(T-\xi) \xi\\
& \geq b.
\end{split}
\end{equation*}
\end{proof}

\begin{lemma}
\label{lm3.8}
For all $u \in P(\alpha, b, c_{1})$ with  $\|Gu\| > d$ one has
$$
\alpha(Gu) > b \, .
$$
\end{lemma}

\begin{proof}
If $u \in P(\alpha, b, c_{1})$ and $ \|Gu\| > d$, then $0 \leq u(t) \leq c_{1}$.
Using hypothesis (H3) and the fact that  $0 < \beta < 1$, it follows that
\begin{equation*}
\begin{split}
\alpha(Gu)&= Gu(T- \xi)\\
& =-\int_{0}^{T- \xi}\phi_{q}\left (g(s) \right) \Delta s + B \\
& \geq -\int_{0}^{T}\phi_{q}\left (g(s) \right) \Delta s + Gu(0)\\
& \geq \| Gu \| - T^{2} c \phi_{q}\left(\frac{1}{T}\right)\\
& \geq d- T^{2} c \phi_{q}\left(\frac{1}{T}\right)\\
& > b.
\end{split}
\end{equation*}
\end{proof}

Gathering Lemmas~\ref{lm1} to \ref{lm3.8}
and applying Theorem~\ref{thm1}, there exist at
least three positives solutions $u_{1}$, $u_{2}$,
and $u_{3}$ to \eqref{eq1}--\eqref{eq2} verifying
$$
\|u_{1}\| < a \, , \quad
b < \alpha(u_{2})\, , \quad
\|u_{3}\| > a \, , \quad
\mbox{ and } \, \alpha(u_{3}) < b.
$$
\end{proof}

\begin{example}
Let $\mathbb{T}=\left\{ 1-\left(\frac{1}{2}\right)^{\mathbb{N}_{0}}\right\} \cup \{ 1\}$,
where $\mathbb{N}_{0}$ denotes the set of all nonnegative
integers. Consider the $p$-Laplacian dynamic equation
\begin{equation}
\label{e6}
-\left ( \phi_{p}(u^{\Delta}(t))\right)^{\nabla}
= \frac{\lambda f(u(t))}{\left( \int_{0}^{T} f(u(\tau ))\, \nabla \tau \right)^{2}},
\quad  t \in  (0,T)_{\mathbb{T}} \, ,
\end{equation}
satisfying the boundary conditions
\begin{equation}
\label{e7}
\begin{gathered}
\phi_{p}\left(u^{\Delta}(0)\right) - \beta \left(
\phi_{p}\left(u^{\Delta}\left(\frac{1}{4}\right)\right)\right)=0,\\
u(1) -\beta u\left(\frac{1}{4}\right)=0,
\end{gathered}
\end{equation}
where $p=\frac{3}{2}$, $q=3$, $\eta = \frac{1}{4}$, $\beta=\frac{1}{2}$,
$\lambda=1$, $T=1$, and
$$
f(u)
=
\begin{cases}
2\sqrt{2}, & 0\leq u\leq 1, \\
4(u-1)+2\sqrt{2} & 1\leq u\leq \frac{3}{2}, \\
2+2\sqrt{2}  & \frac{3}{2}\leq u\leq 10,\\
2u + 2\sqrt{2}-18 & 10 \leq u\leq 16.
\end{cases}
$$
Choose $a_{1}=1=2a$, $b=\frac{3}{2}$, $c_{1}=16=2c$, and $d=10$.
It is easy to see that $\zeta=2$, $B_{1}=\frac{1}{2(2+\sqrt{2})}$, and
\begin{gather*}
\min \{ f(u):u\in [0,a_{1}]\} =2\sqrt{2}
\geq  \frac{\lambda^{2}}{T(1-\beta) \phi_{p}(a)}
= \frac{2}{\sqrt{a}} = 2\sqrt{2},\\
\min \{ f(u):u\in [0,c_{1}]\} =2\sqrt{2}
\geq \frac{\lambda^{2}}{T(1-\beta) \phi_{p}(c)}
= \frac{2}{\sqrt{c}}=\frac{1}{\sqrt{2}},\\
\min \{ f(u):u\in [b,d]\} =2+2\sqrt{2}
\geq \phi_{p}(bB_{1})=\sqrt{bB_{1}}
=\sqrt{\frac{3}{4(2+2\sqrt{2})}}.
\end{gather*}
Then, hypotheses $(H1)$--$(H4)$ are satisfied. Therefore,
by Theorem~\ref{thm:mr:secOrig1},
problem \eqref{e6}--\eqref{e7}
has at least three positive solutions.
\end{example}


\subsection{Quasilinear Elliptic Problem}
\label{sec:mr:part2}

We are interested in this section in the study of the following quasilinear
elliptic problem:
\begin{equation}
\label{eq:qep}
\begin{gathered}
-\left ( \phi_{p}(u^{\Delta}(t))\right)^{\nabla}= f(u(t)) +
h(t)\, , \quad  t \in  (0,T)_{\mathbb{T}} \, ,\\
u^{\Delta}(0)=0  \, , \quad u(T) - u(\eta)=0 \, ,
\end{gathered}
\end{equation}
where $\eta \in (0, T)_\mathbb{T}$.
We assume the following hypotheses:
\begin{itemize}
\item[(A1)] function $f: \mathbb{R} \rightarrow \mathbb{R}_0^{+}$ is
continuous;

\item[(A2)] function $h: (0,T)_\mathbb{T} \rightarrow \mathbb{R}_0^{+}$ is
left dense continuous, \textrm{i.e},
$$
h \in \mathbb{C}_{ld}\left((0,T)_\mathbb{T}, \mathbb{R}_0^{+}\right),
\quad \text{ and } h \in L^{\infty}.
$$
\end{itemize}
Similarly as in \S\ref{sec:mr:part1}, we prove existence of solutions by constructing
an operator whose fixed points are solutions to \eqref{eq:qep}.
The main ingredient is, again, the Leggett--Williams
fixed point theorem (Theorem~\ref{thm1}).
We can easily see that \eqref{eq:qep}
is equivalent to the integral equation
\begin{equation*}
u(t)= \phi_{q} \left (\int_{\eta}^{T}(f(u(r)+h(r)) \, \nabla r \right )
+\int_{0}^{t}\phi_{q}\left ( \int_{s}^{T} (f(u(r)+h(r)) \, \nabla r
\right ) \Delta s.
\end{equation*}
On the other hand, we have $-(\phi_{p}(u^{\Delta}))^{\nabla}
= f(u(t))+ h(t)$. Since $f$, $h \geq 0$, we have
$(\phi_{p}(u^{\Delta}))^{\nabla} \leq 0$ and
$(\phi_{p}(u^{\Delta}(t_2))) \leq
(\phi_{p}(u^{\Delta}(t_1)))$ for any $t_1,\, t_2 \in [0, T]_\mathbb{T}$
with $t_1 \leq t_2$. It follows that $u^{\Delta}(t_2) \leq
u^{\Delta}(t_1)$ for $t_1 \leq t_2$. Hence, $u^{\Delta} (t)$
is a decreasing function on $[0, T]_\mathbb{T}$. Then, $u$ is concave.
In order to apply Theorem~\ref{thm1} we define the cone
$$
P= \{ u \in E \ | \ u \mbox{ is nonnegative, increasing on }[0, T]_\mathbb{T} \, ,
\mbox{ and } concave \mbox{ on } E \}.
$$
For $\xi \in \left(0, \frac{T}{2}\right)$
we also define the nonnegative continuous concave functional
$\alpha : P \rightarrow \mathbb{R}_0^{+}$ by
$$
\alpha(u)= \min_{t \in [\xi, T-\xi]_\mathbb{T}} u(t)\, , \quad u \in P \, ,
$$
and the operator $F: P\rightarrow E$ by
$$
Fu(t) = \phi_{q} \left ( \int_{\eta}^{T} (f(u(r))+h(r)) \, \nabla r
\right )+ \int_{0}^{t} \phi_{q} \left ( \int_{s}^{T}(f(u(r)+h(r)) \,
\nabla r \right ) \Delta s.
$$
It is easy to see that \eqref{eq:qep} has a solution $u= u(t)$ if and
only if $u$ is a fixed point of the operator $F$.
For convenience, we  introduce the following notation:
\begin{equation*}
\begin{split}
\gamma &= (1+T)\phi_{q}(T)\, ,\\
A &= \frac{a- \alpha \|h\|_{\infty}^{1/p-1}}{\alpha a},\mbox{ where }
\alpha = \phi_{q}(2^{p-2}) \phi_{q}(T)(T+1) \, ,\\
B &= \phi_{p}(T- \eta)\, .
\end{split}
\end{equation*}

\begin{theorem}
\label{thm39}
Suppose that hypotheses $(A1)$ and $(A2)$ are satisfied;
there exist positive constants $a$, $b$, $c$, and $d$ with
$$
0 < \gamma a = a_{1} < b <d- 2^{p-2}(T-\xi)\phi_{q}(T- \xi)
\left( \|h\|^{\frac{1}{p-1}}+bB \right)< d < \gamma c = c_{1} \, ;
$$
and, in addition to (A1) and (A2), that $f$ satisfies
\begin{itemize}
\item[(A3)]   $\max_{0\leq u \leq a}f(u) \leq \phi_{p}(aA)$;

\item[(A4)] $\max_{0\leq u \leq c}f(u) \leq \phi_{p}(cA)$;

\item[(A5)] $\min_{b\leq u \leq d}f(u) \geq
\phi_{p}(bB)$.
\end{itemize}
Then problem \eqref{eq:qep} has at least three positive solutions
$u_{1}$, $u_{2}$, and $u_{3}$, verifying
$$ \|u_{1}\| < a\, , \quad
b < \alpha(u_{2})\, , \quad
\|u_{3}\| > a \, , \quad \mbox{ and } \,  \alpha(u_{3}) < b.$$
\end{theorem}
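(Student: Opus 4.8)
The plan is to mirror the proof of Theorem~\ref{thm:mr:secOrig1}: verify the three hypotheses of the Leggett--Williams theorem (Theorem~\ref{thm1}) for the operator $F$ on the cone $P$ with the concave functional $\alpha$. Several preliminaries are already in place. The discussion preceding the statement shows that $Fu$ is nonnegative, increasing and concave, so $F(P)\subseteq P$; complete continuity of $F$ follows exactly as in the proof of Lemma~\ref{lm32}, by bounding $F(D)$ on a bounded set $D\subseteq P$ and invoking the Arzela--Ascoli theorem through a Lipschitz-type estimate $|Fu(t_2)-Fu(t_1)|\le c\,|t_2-t_1|$. Because every $u\in P$ is \emph{increasing} here, I would first record the two identities $\|u\|=u(T)$ and $\alpha(u)=u(\xi)$; these give $\alpha(u)\le\|u\|$ on $\overline{P_{c_1}}$ and are used throughout.

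First I would establish the self-map property $F\overline{P_{c_1}}\subseteq\overline{P_{c_1}}$ (the domain requirement of Theorem~\ref{thm1}) together with condition~(ii), that $\|Fu\|<a$ for $u\in\overline{P_a}$. For $u\in\overline{P_a}$ one has $0\le u(t)\le a$, so by (A3) $f(u(r))\le\phi_p(aA)$; substituting this together with $h\le\|h\|_\infty$ into $Fu(T)$ and pulling the constant out of the inner integral yields a bound of the form $(1+T)\phi_q(T)\,\phi_q\big((aA)^{p-1}+\|h\|_\infty\big)$. The essential nonlinear step is that $\phi_q$ is not additive, so I would control $\phi_q$ of the sum by the convexity/power inequality $\phi_q(x+y)\le\phi_q(2^{p-2})\big(\phi_q(x)+\phi_q(y)\big)$; this produces precisely the factor $\alpha=\phi_q(2^{p-2})\phi_q(T)(T+1)$ together with the terms $aA$ and $\|h\|_\infty^{1/(p-1)}$. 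Since $A$ is defined so that $\alpha\big(aA+\|h\|_\infty^{1/(p-1)}\big)=a$, this gives $\|Fu\|\le a$. The analogous estimate, now using (A4), $\gamma=(1+T)\phi_q(T)$ and $c_1=\gamma c$, yields $F\overline{P_{c_1}}\subseteq\overline{P_{c_1}}$.

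Next I would verify condition~(i). Nonemptiness of $\{u\in P(\alpha,b,d):\alpha(u)>b\}$ follows by testing the constant function $u\equiv\frac{b+d}{2}$, exactly as in Lemma~\ref{lm37}. For $u\in P(\alpha,b,d)$, monotonicity gives $b\le\alpha(u)=u(\xi)\le u(r)$ for $r\ge\xi$ while $u(r)\le\|u\|<d$, so $b\le u(r)\le d$ on the relevant range and (A5) gives $f(u(r))\ge\phi_p(bB)$ there. Lower bounding $\alpha(Fu)=Fu(\xi)\ge\phi_q\big(\int_\eta^T f(u(r))\,\nabla r\big)$ and using $B=\phi_p(T-\eta)$ then produces $\alpha(Fu)>b$. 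Finally, for condition~(iii) I would write $\alpha(Fu)=\|Fu\|-\int_\xi^T\phi_q\big(\int_s^T(f(u)+h)\,\nabla r\big)\Delta s$ and bound the correction term from above by $2^{p-2}(T-\xi)\phi_q(T-\xi)\big(\|h\|^{1/(p-1)}+bB\big)$, so that on $\{u\in P(\alpha,b,c_1):\|Fu\|>d\}$ we get $\alpha(Fu)\ge d-2^{p-2}(T-\xi)\phi_q(T-\xi)(\|h\|^{1/(p-1)}+bB)>b$ by the chain of inequalities imposed on $a,b,c,d$. With the three conditions verified, Theorem~\ref{thm1} delivers fixed points $u_1,u_2,u_3$ with the stated properties.

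The step I expect to be most delicate is the bookkeeping around the $p$-Laplacian applied to sums. Unlike the nonlocal problem of \S\ref{sec:mr:part1}, here $h\not\equiv0$, so the integrand $f(u)+h$ never reduces to a single term, and every estimate must split $\phi_q(x+y)$ via the $\phi_q(2^{p-2})$ factor; making these constants line up with the definitions of $A$, $\gamma$, $\alpha$, $B$ and with the prescribed ordering $a_1<b<d-\cdots<d<c_1$ is where the real effort goes. A secondary point requiring care is the reversed direction of monotonicity: because solutions are increasing rather than decreasing as in Theorem~\ref{thm:mr:secOrig1}, the roles of $t=0$ and $t=T$ are swapped, so $\|u\|=u(T)$ and $\alpha(u)=u(\xi)$, and one must check that the integration windows $[\eta,T]$ and $[\xi,T]$ genuinely capture the region where $b\le u\le d$.
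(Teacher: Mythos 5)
Your proposal follows the paper's own proof essentially step for step: complete continuity via the Arzela--Ascoli argument, the self-map properties $F\overline{P_{a_1}}\subseteq\overline{P_{a_1}}$ and $F\overline{P_{c_1}}\subseteq\overline{P_{c_1}}$ obtained from (A3)--(A4) together with the inequality $x^{p}+y^{p}\le 2^{p-1}(x+y)^{p}$ and the definitions of $A$, $\alpha$, $\gamma$, condition (i) from (A5) via $\alpha(Fu)=Fu(\xi)\ge\phi_q\bigl(\int_\eta^T f(u(r))\,\nabla r\bigr)$, and condition (iii) from the decomposition $\alpha(Fu)=\|Fu\|-\int_\xi^T\phi_q\bigl(\int_s^T(f(u)+h)\,\nabla r\bigr)\Delta s$ bounded exactly as in the paper. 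This is the same route, including the identical estimate of the correction term by $2^{p-2}(T-\xi)\phi_q(T-\xi)\bigl(\|h\|^{1/(p-1)}+bB\bigr)$, so nothing further is needed.
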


\begin{proof}
As done for Theorem~\ref{thm:mr:secOrig1},
the proof is divided in several steps. We first show that
$F: P \rightarrow P$ is completely continuous.
Indeed, $F$ is obviously continuous. Let
$$
U_{\delta} = \{ u \in P \ | \  \|u\|  \leq \delta \}.
$$
It is easy to see that for $u \in U_{\delta}$
there exists a constant $c> 0$ such that $|Fu(t)| \leq c$.
On the other hand, let $t_{1}, t_{2} \in (0, T)_\mathbb{T}$, $u \in U_{\delta}$.
Then there exists a positive constant $c$ such that
$$
|Fu(t_{2})- Fu(t_{1})|  \leq c |t_{2}- t_{1}| \, ,
$$
which converges uniformly to zero when $|t_{2}- t_{1}|$ tends to zero.
Using the Arzela--Ascoli theorem we conclude that
$F: P\rightarrow P$ is completely continuous.

We now show that
$$
F\overline{P_{c_{1}}} \subset \overline{P_{c_{1}}}\, ,
\quad F\overline{P_{a_{1}}} \subset \overline{P_{a_{1}}}\, .
$$
For all $u \in \overline{P_{a_{1}}}$ we have $0 \leq u \leq a_{1}$ and
\begin{equation*}
\begin{split}
\|F(u)\| &  \leq \phi_{q} \left ( \int_{\eta}^{T}((aA)^{p-1}+
\|h\|_{\infty}) \nabla r \right ) \\
& \qquad+ \int_{0}^{T} \phi_{q} \left
(\int_{s}^{T}((aA)^{p-1}+ \|h\|_{\infty}) \nabla r \right )
\Delta s\\
&\leq \phi_{q}\left ( ((aA)^{p-1}+ \|h\|_{\infty})(T- \eta) \right) \\
& \qquad + \int_{0}^{T} \phi_{q} \left ( ((aA)^{p-1}+ \|h\|_{\infty})(T- s)
\right ) \Delta s\\
&\leq \phi_{q}\left ( (aA)^{p-1}+ \|h\|_{\infty}\right )
\phi_{q}(T) \\
& \qquad + \phi_{q}\left ( (aA)^{p-1}+ \|h\|_{\infty}\right )
\int_{0}^{T} \phi_{q}(T-s) \Delta s\\
&\leq \phi_{q}\left ( (aA)^{p-1}+
(\|h\|_{\infty}^{1/p-1})^{p-1}\right ) \phi_{q}(T) \\
& \qquad + \phi_{q}\left (
(aA)^{p-1}+ (\|h\|_{\infty}^{1/p-1})^{p-1}\right )
\phi_{q}(T) T \\
& \leq \phi_{q}\left ( (aA)^{p-1}+
(\|h\|_{\infty}^{1/p-1})^{p-1}\right ) \phi_{q}(T) (T+1).
\end{split}
\end{equation*}
Using the elementary inequality
$$
x^{p} + y^{p} \leq 2^{p-1}(x+y)^{p},
$$
and the form of $A$, it follows that
\begin{equation*}
\begin{split}
\|F(u)\| &  \leq \phi_{q}(T+1)(2^{p-2})(aA+
\|h\|_{\infty}^{1/p-1})\\
& \leq  \phi_{q}(T+1)(2^{p-2})a =\gamma a = a_{1}.
\end{split}
\end{equation*}
Then $F\overline{P_{a_{1}}} \subset \overline{P_{a_{1}}}$.
In a similar way we prove that
$F\overline{P_{c_{1}}} \subset \overline{P_{c_{1}}}$.

Our following step consists in show that
$$
\{ u \in P(\alpha, b, d) \ | \ \alpha(u) > b \} \neq \emptyset
$$
and
\begin{equation}
\label{eq:ntp}
\alpha(Fu) > b, \mbox{ if } u \in P(\alpha, b, d).
\end{equation}
The first point is obvious. Let us prove \eqref{eq:ntp}.
For $u \in P(\alpha, b, d)$ we have $b \leq u \leq d$,
if $t \in [\xi, T]_\mathbb{T}$. Then, using $(A2)$ we have
\begin{equation*}
\begin{split}
\alpha(Fu) &= Fu(\xi)\\
& \geq \phi_{q}\left ( \int_{\eta}^{T} f(u(r) \, \nabla r
\right ) + \int_{0}^{\xi} \phi_{q} \left ( \int_{s}^{T}(f(u(r)) \,
\nabla r
\right ) \Delta s \\
& \geq \phi_{q}\left ( \int_{\eta}^{T} f(u(r) \, \nabla r
\right )\\
& \geq bB\phi_{q}(T-\xi)\\
& \geq b.
\end{split}
\end{equation*}
Finally we prove that
$\alpha(Fu) > b$ for all  $u \in P(\alpha, b, c_{1})$ and $\|Fu\| > d$:
\begin{equation*}
\begin{split}
\alpha(Fu) &= Fu(\xi)\\
&= \phi_{q} \left ( \int_{\eta}^{T} (f(u(r))+h(r)) \, \nabla r
\right )+ \int_{0}^{\xi} \phi_{q} \left ( \int_{s}^{T}(f(u(r)+h(r)) \,
\nabla r \right ) \Delta s\\
&= \phi_{q} \left ( \int_{\eta}^{T} (f(u(r))+h(r)) \, \nabla r
\right )+ \int_{0}^{T} \phi_{q} \left ( \int_{s}^{T}(f(u(r)+h(r)) \,
\nabla r \right ) \Delta s\\
& \qquad - \int_{\xi}^{T} \phi_{q} \left ( \int_{s}^{T}(f(u(r)+h(r)) \, \nabla r \right ) \Delta s\\
& \geq \|Fu\| - \int_{\xi}^{T} \phi_{q} \left ( \int_{s}^{T}(f(u(r)+h(r)) \, \nabla r \right ) \Delta s\\
& \geq \|Fu\| - \int_{\xi}^{T} \phi_{q} \left ( \int_{\xi}^{T}(f(u(r)+h(r)) \, \nabla r \right ) \Delta s\\
& \geq \|Fu\| - \int_{\xi}^{T} \phi_{q}\left( (T- \xi)(\|h\|+\phi_{p}(bB)  \right)\\
& \geq \|Fu\| -(T-\xi)\phi_{q}(T- \xi) \phi_{q}\left( \|h\|+\phi_{p}(bB) \right).
\end{split}
\end{equation*}
Using again the elementary inequality
$x^{p} + y^{p} \leq 2^{p-1}(x+y)^{p}$
we get that
\begin{equation*}
\begin{split}
\alpha(Fu) & \geq \|Fu\| - (T-\xi)\phi_{q}(T- \xi) \left( \|h\|^{\frac{1}{p-1}}+bB \right)\\
&  \geq d- 2^{p-2}(T-\xi)\phi_{q}(T- \xi) \left( \|h\|^{\frac{1}{p-1}}+ bB  \right )\\
& \geq b.
\end{split}
\end{equation*}
By Theorem~\ref{thm1} there exist at least three positive solutions
$u_{1}$, $u_{2}$, and $u_{3}$ to \eqref{eq:qep} satisfying
$\|u_{1}\| < a$, $b < \alpha(u_{2})$, $\|u_{3}\| > a$,
and $\alpha(u_{3}) < b$.
\end{proof}

\begin{example}
Let $\mathbb{T}=\left\{ 1-\left(\frac{1}{2}\right)^{\mathbb{N}_{0}}\right\}
\cup \{ 1\}$, where $\mathbb{N}_{0}$ denotes the set of all nonnegative
integers. Consider the $p$-Laplacian dynamic equation
\begin{equation}
\label{e6.1}
[\phi _p(u^{\Delta }(t))] ^{\nabla }+f(u(t))=0,
\quad t\in [0,1]_{\mathbb{T}},
\end{equation}
satisfying the boundary conditions
\begin{equation}
\label{e6.2}
u(1)-u\left(\frac{1}{2}\right)=0,
\quad u^{\triangle }(0)=0,
\end{equation}
where $p=\frac{3}{2}$, $q=3$,
$a(t)\equiv 1$, $h \equiv 0$, $T=1$, and
$$
f(u)=\begin{cases}
\frac{\sqrt{2}}{2}, & 0\leq u\leq \frac{1}{2}, \\
4(u-\frac{1}{2})+\frac{\sqrt{2}}{2} & \frac{1}{2}\leq u\leq \frac{3}{2},\\
4+\frac{\sqrt{2}}{2} & \frac{3}{2} \leq u \leq 25.
\end{cases}
$$
Choose $a=\frac{1}{2}$, $b=\frac{3}{2}$, $c=25$, and $d=3$. It is easy
to see that $\gamma=2$, $A=1$, $B=\frac{\sqrt{2}}{2}$, $\alpha=1$, and
\begin{gather*}
\max \left\{ f(u):u\in [0,\frac{1}{2}]\right\} =\frac{\sqrt{2}}{2}
\leq (aA)^{p-1} =\sqrt{a}=\frac{\sqrt{2}}{2},\\
\max \{ f(u):u\in [0,25]\} =4+\frac{\sqrt{2}}{2}\simeq 4,707
\leq (cA)^{p-1}=c^{p-1}=\sqrt{c}= 5 ,\\
\min \left\{ f(u):u\in \left[ \frac{3}{2},3\right]\right\}
=4+\frac{\sqrt{2}}{2} \simeq 4,707 \geq (bB)^{p-1}=\sqrt{bB}\simeq 1,02.
\end{gather*}
Therefore, by Theorem~\ref{thm39}, problem \eqref{e6.1}--\eqref{e6.2}
has at least three positive solutions.
\end{example}


\subsection{A $p$-Laplacian Functional Dynamic Equation on Time Scales
with Delay}
\label{sec:mr:part3}

Let $\mathbb{T}$ be a time scale with $0, T \in \mathbb{T}_\kappa^\kappa$,
$-r \in \mathbb{T}$ with $-r \le 0 < T$.
We are concerned in this section with the existence of positive solutions
to the $p$-Laplacian dynamic equation
\begin{equation}
\label{equa1:subsec3}
\begin{gathered}
\left(\phi_{p}(u^{\Delta}(t))\right)^\nabla
+ \lambda a(t) f(u(t), u(\omega(t)))=0 \, ,
\quad t \in (0,T)_\mathbb{T} \, ,\\
u(t)= \psi(t)\, , \quad t \in [-r, 0]_\mathbb{T}\, ,
\quad u(0)-B_{0}(u^{\Delta}(0))=0\, ,
\quad u^{\Delta}(T)=0 \, ,
\end{gathered}
\end{equation}
where $\lambda > 0$.
We define $ X= \mathbb{C}_{ld}([0, T]_\mathbb{T}, \mathbb{R})$,
which is a Banach space with the maximum norm
$\|u\|= \max_{[0, T]_\mathbb{T}}|u(t)|$.
We note that $u$ is a solution to \eqref{equa1:subsec3}
if and only if
\begin{equation*}
u(t)
=\begin{cases}
B_0 \left(\phi_q\left(\int_{0}^{T}\lambda a(r)f(u(r),u(\omega(r)))\nabla r
\right)\right)\\
\quad  + \int_{0}^{t}\phi_q\left(\int_{s}^{T}\lambda a(r)f(u(r),u(\omega(r)))\nabla r
\right)\Delta s  & \text{if } t\in [0,T]_\mathbb{T},\\
\psi(t) & \text{if } t\in[-r,0]_\mathbb{T} \, .
\end{cases}
\end{equation*}
Let
$$
K = \{ u \in X \ | \ u \mbox{ is nonnegative and concave on } E \}.
$$
Clearly $K$ is a cone in the Banach space $X$. For each $u \in X$, we extend
$u$ to $[-r, 0]_\mathbb{T}$ with $u(t)=\psi(t)$ for $t \in [-r, 0]_\mathbb{T}$.
We also define the nonnegative continuous concave functional
$\alpha: P \rightarrow \mathbb{R}_0^+$ by
$$
\alpha(u) = \min_{t \in [\xi, T-\xi]_\mathbb{T}} u(t) \, , \quad
\xi \in \left(0, \frac{T}{2}\right)\, , \quad \forall u \in K.
$$
For $t \in  [0, T]_\mathbb{T}$, define $Q: K \rightarrow X$ as
\begin{equation}
\label{eq:def:Qu}
\begin{aligned}
Q u(t)&= B_0\left(\phi_q\left(\int_{0}^{T}\lambda a(r)f\left(u(r),u(\omega(r))\right)
\nabla r\right)\right)\\
&\qquad  + \int_{0}^{t}\phi_q\left(\int_{s}^{T}
\lambda a(r)f(u(r),u(\omega(r)))\nabla r\right)\Delta s \, .
\end{aligned}
\end{equation}

\begin{lemma}
Let $u_{1}$ be a fixed point of $Q$ in the cone $K$. Define
\begin{equation}
\label{u:ps}
u(t)=
\begin{cases}
 u_{1},  & t \in [0, T]_\mathbb{T}, \\
\psi (t),  & t \in [-r, 0]_\mathbb{T}.
\end{cases}
\end{equation}
It follows that \eqref{u:ps}
is a positive solution to \eqref{equa1:subsec3} satisfying
\begin{equation}
\label{equa2}
\| Q u\| \leq (T+\gamma)\lambda^{q-1}\phi_q\left(\int_{0}^{T}
a(r)f\left(u(r),u(\omega(r))\right)\nabla r\right)
\mbox{ for } t \in [0, T]_\mathbb{T}.
\end{equation}
\end{lemma}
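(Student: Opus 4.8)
The plan is to split the statement into two independent claims: first, that the piecewise function $u$ in \eqref{u:ps} genuinely solves the boundary value problem \eqref{equa1:subsec3}, and second, that the a priori bound \eqref{equa2} holds. For the first claim I would not reprove the integral reformulation from scratch, since the equivalence between solutions of \eqref{equa1:subsec3} and fixed points of $Q$ was already recorded immediately before the lemma; instead I would verify directly that a fixed point of $Q$ meets each requirement in \eqref{equa1:subsec3}. Writing $u_1 = Qu_1$ with $Q$ as in \eqref{eq:def:Qu}, I would delta-differentiate the representation to obtain $u_1^\Delta(t) = \phi_q\left(\int_t^T \lambda a(r) f(u_1(r), u_1(\omega(r)))\,\nabla r\right)$, since the $B_0$ term is constant in $t$. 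Evaluating at $t=T$ gives $u_1^\Delta(T) = \phi_q(0) = 0$, and evaluating the representation itself at $t=0$ gives $u_1(0) = B_0(u_1^\Delta(0))$, so both endpoint conditions hold. Applying $\phi_p$ and then nabla-differentiating recovers $\left(\phi_p(u_1^\Delta(t))\right)^\nabla = -\lambda a(t) f(u_1(t), u_1(\omega(t)))$, which is exactly \eqref{equa1:subsec3}; the delay condition $u(t) = \psi(t)$ on $[-r,0]_\mathbb{T}$ holds by the definition \eqref{u:ps}. Positivity is immediate: $u_1 \in K$ is nonnegative on $[0,T]_\mathbb{T}$, and $u = \psi \ge 0$ on $[-r,0]_\mathbb{T}$ by the hypotheses on the data.

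For the bound \eqref{equa2}, the first observation is that $Qu$ is nondecreasing on $[0,T]_\mathbb{T}$, because $(Qu)^\Delta(t) = \phi_q\left(\int_t^T \lambda a(r) f\,\nabla r\right) \ge 0$ whenever $a, f \ge 0$; hence $\|Qu\| = Qu(T)$ and it suffices to estimate $Qu(T)$. I would then use the positive homogeneity $\phi_q(\lambda x) = \lambda^{q-1}\phi_q(x)$, valid for $\lambda > 0$, to pull the factor $\lambda^{q-1}$ out of every occurrence of $\phi_q$. Since the integrand $a(r)f(\cdot,\cdot)$ is nonnegative, $\int_s^T a f\,\nabla r \le \int_0^T a f\,\nabla r$ for every $s$, and monotonicity of $\phi_q$ bounds the integral term in \eqref{eq:def:Qu} by $T\lambda^{q-1}\phi_q\left(\int_0^T a f\,\nabla r\right)$. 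The $B_0$ term is controlled by the standing structural hypothesis on $B_0$, namely $B_0(v) \le \gamma v$ for $v \ge 0$, which yields $B_0\left(\phi_q\left(\int_0^T \lambda a f\,\nabla r\right)\right) \le \gamma\lambda^{q-1}\phi_q\left(\int_0^T a f\,\nabla r\right)$. Adding the two contributions produces the factor $(T + \gamma)$ and gives exactly \eqref{equa2}.

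The routine part is the chain of inequalities for \eqref{equa2}; the one place to be careful is the correct use of the $B_0$ growth condition that generates the constant $\gamma$, together with the clean factoring of $\lambda^{q-1}$ through $\phi_q$ for the positive scalar $\lambda$. A second point requiring attention is the endpoint computation $u_1(0) = B_0(u_1^\Delta(0))$: it must be read off the representation \eqref{eq:def:Qu} at $t=0$, where the integral term vanishes, so that the boundary condition is automatically encoded in the fixed-point identity rather than needing a separate argument. With these two observations in place, the remaining steps are direct applications of the fundamental theorem of calculus on time scales and the monotonicity of $\phi_q$.
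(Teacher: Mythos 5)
Your argument for the bound \eqref{equa2} is exactly the paper's: evaluate $\|Qu\|=(Qu)(T)$, pull $\lambda^{q-1}$ out of $\phi_q$, bound the $B_0$ term by $\gamma$ via $(C5)$ and the integral term by $T$ via monotonicity of $\phi_q$, giving the factor $(T+\gamma)$. The only difference is that you also verify explicitly that a fixed point of $Q$ satisfies \eqref{equa1:subsec3}, which the paper leaves to the integral-equation equivalence stated just before the lemma; that extra check is correct and harmless.
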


\begin{proof}
\begin{equation*}
\begin{aligned}
\|Q u\|&=(Q u)(T)\\
&= B_0 \left(\phi_q\left(\int_{0}^{T}\lambda a(r)f(u(r),u(\omega(r)))\nabla
r\right)\right)\\
&\quad + \int_{0}^{T}\phi_q\left(\int_{s}^{T} \lambda
a(r)f\left(u(r),u(\omega(r))\right)
\nabla r\right)\Delta s\\
&\leq  (T+\gamma)\lambda^{q-1}\phi_q\left(\int_{0}^{T}
a(r)f\left(u(r),u(\omega(r))\right)\nabla r\right) \, .
\end{aligned}
\end{equation*}
\end{proof}

From \eqref{eq:def:Qu} and \eqref{equa2} it follows that
\begin{itemize}
\item[(i)]  $Q (K) \subset K$;
\item[(ii)] $Q : K\to K$ is completely continuous;
\item[(iii)] $ u(t)\geq \frac{\delta}{T+\gamma}\|u\|$,
$t\in[0,T]_\mathbb{T}$.
\end{itemize}

Depending on the signature of the delay $\omega$,
we set the following two subsets of $[0, T]_\mathbb{T}$:
$$
Y_1:=\{t\in[0,T]_\mathbb{T} \ | \ \omega (t)<0\};
\quad  Y_2:=\{t\in[0,T]_\mathbb{T} \ | \ \omega(t) \geq 0\}.
$$
In the remainder of this section, we suppose that $Y_1$ is nonempty and
$\int_{Y_1}a(r)\nabla r>0$. For convenience we also denote
\begin{gather*}
l := \frac{\phi_p \big(\int_{0}^{T}a(r)\nabla r\big)}{\lambda^{q-1}(T+\gamma)},
\quad
m := \frac{\phi_p \big(\int_{0}^{T}a(r)\nabla r\big)}{\delta \lambda^{q-1}}.
\end{gather*}

\begin{theorem}
\label{thm:5.2}
Suppose that there exist positive constants $a$, $b$, $c$, $d$ such that
$0 < a < b  < \frac{\delta d}{T+ \gamma}< d < c$.
Assume that the following hypotheses $(C1)$--$(C8)$ hold:
\begin{itemize}
\item[(C1)] $f:\mathbb{R}_0^{+} \times \mathbb{R}_0^{+}
\rightarrow \mathbb{R}_0^{+}$ is continuous;

\item[(C2)] function $a: (0,T)_\mathbb{T}
\rightarrow \mathbb{R}_0^{+}$ is left dense continuous;

\item[(C3)] $\psi : [-r, 0]_\mathbb{T}
\rightarrow \mathbb{R}_0^{+}$ is continuous;

\item[(C4)] $\omega: [0, T]_\mathbb{T}
\rightarrow [-r, T]_\mathbb{T}$ is continuous,
$\omega(t)\leq t$ for all $t$;

\item[(C5)] $B_{0}: \mathbb{R}
\rightarrow \mathbb{R}$ is continuous and
there are $0< \delta \leq \gamma$ such that
$$
\delta s \leq B_{0}(s) \leq \gamma s \mbox{ for } s \in \mathbb{R}_0^{+}.
$$

\item[(C6)] $\lim_{x\to 0^+}\frac{f(x,\psi(s))}{x^{p-1}}<l^{p-1}$,
uniformly in $s\in [-r,0]_\mathbb{T}$;

\item[(C7)]
$\lim_{x_1\to 0^+;x_2\to 0^+}\frac{f(x_1,x_2)}{\max\{x_1^{p-1},x_2^{p-1}\}}
<l^{p-1}$;

\item[(C8)] $\lim_{x\to\infty}\frac{f(x,\psi(s))}{x^{p-1}}>m^{p-1}$,
uniformly in $s\in [-r,0]_\mathbb{T}$.
\end{itemize}
Then, for each $0 < \lambda < \infty$ the boundary value problem \eqref{equa1:subsec3}
has at least three positive solutions $u_{1}$, $u_{2}$, and $u_{3}$ verifying
$$
\|u_{1}\| < a\, , \quad
b < \alpha(u_{2})\, , \quad
\|u_{3}\| > a \, , \quad  \alpha(u_{3}) < b\, .
$$
\end{theorem}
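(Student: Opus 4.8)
The plan is to verify the three hypotheses of the Leggett--Williams theorem (Theorem~\ref{thm1}) for the operator $Q$ on $\overline{K_{c}}$ with the concave functional $\alpha$. Several ingredients are already in place: $Q$ is completely continuous with $Q(K)\subseteq K$, the Harnack-type estimate $u(t)\geq\frac{\delta}{T+\gamma}\|u\|$ holds on $K$, and, since the integrand defining $Qu$ in \eqref{eq:def:Qu} is nonnegative, $Qu$ is nondecreasing in $t$; hence $\|Qu\|=Qu(T)$ and $\alpha(Qu)=\min_{[\xi,T-\xi]_{\mathbb{T}}}Qu=Qu(\xi)$. Before invoking Theorem~\ref{thm1} I would first record the self-mapping property $Q(\overline{K_{c}})\subseteq\overline{K_{c}}$, so that $Q:\overline{K_{c}}\to\overline{K_{c}}$ as the theorem demands, using the norm estimate \eqref{equa2}.

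The key preliminary step is to convert the asymptotic hypotheses $(C6)$--$(C8)$ into uniform pointwise bounds on $f$ along solutions, and here the delay forces a split of the integral over the sets $Y_{1}=\{\omega(t)<0\}$ and $Y_{2}=\{\omega(t)\geq 0\}$. On $Y_{1}$ one has $u(\omega(t))=\psi(\omega(t))$, so $f$ is evaluated as $f(u(t),\psi(\cdot))$ and $(C6)$, $(C8)$ apply; on $Y_{2}$ both arguments are solution values and $(C7)$ applies. Thus $(C6)$ together with $(C7)$ yield, for $\|u\|$ small, an estimate of the form $f(u(r),u(\omega(r)))\leq l^{\,p-1}\|u\|^{p-1}$, while $(C8)$ yields, for the relevant large arguments, $f(u(r),\psi(\omega(r)))\geq m^{\,p-1}u(r)^{p-1}$ on $Y_{1}$. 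The standing assumption $\int_{Y_{1}}a(r)\,\nabla r>0$ is exactly what makes this lower bound effective after integration.

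With these bounds the three conditions follow. For condition $(ii)$ I would take $u\in\overline{K_{a}}$, insert the near-zero estimate from $(C6)$--$(C7)$ into \eqref{equa2}, and exploit $(p-1)(q-1)=1$ together with the definition of $l$ to conclude $\|Qu\|<a$ once $a$ lies in the admissible small range. Condition $(iii)$ is the cleanest: for $u\in P(\alpha,b,c)$ with $\|Qu\|>d$, the Harnack estimate applied to $Qu\in K$ gives $\alpha(Qu)\geq\frac{\delta}{T+\gamma}\|Qu\|>\frac{\delta d}{T+\gamma}>b$, where the final inequality is precisely the hypothesis $b<\frac{\delta d}{T+\gamma}$. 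For condition $(i)$ the set is nonempty because the constant function $u\equiv\frac{b+d}{2}$ lies in $P(\alpha,b,d)$ with $\alpha(u)>b$; to obtain $\alpha(Qu)>b$ I would use $B_{0}(s)\geq\delta s$ from $(C5)$, discard the nonnegative contribution of $Y_{2}$, and insert the lower bound from $(C8)$ on $Y_{1}$, the positivity $\int_{Y_{1}}a\,\nabla r>0$ guaranteeing a strictly positive outcome.

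The main obstacle is reconciling the \emph{asymptotic} nature of $(C6)$--$(C8)$ with the need for bounds that are \emph{uniform on the finite windows} prescribed by $0<a<b<\frac{\delta d}{T+\gamma}<d<c$. In particular, one must turn the limit $\lim_{x\to\infty}$ in $(C8)$ into a genuine pointwise lower bound over the middle range, keep the near-zero bound from $(C6)$--$(C7)$ valid all the way up to $\|u\|=a$, and—most delicately—secure the self-mapping $Q(\overline{K_{c}})\subseteq\overline{K_{c}}$, which requires controlling $f$ from above up to level $c$, a bound not furnished directly by $(C6)$--$(C8)$ and which must therefore be extracted from them together with the admissible choice of $c$. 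Checking that the constants $l$, $m$, $\delta$, $\gamma$ can be met simultaneously, with the delay term handled consistently on $Y_{1}$ and $Y_{2}$ throughout, is the crux; once the pointwise bounds are in place the verification of $(i)$--$(iii)$ is routine and Theorem~\ref{thm1} delivers the three positive solutions with the stated separation.
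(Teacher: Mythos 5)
Your proposal follows essentially the same route as the paper's proof: the same reduction to the three Leggett--Williams conditions, the same splitting of the integral over $Y_1$ and $Y_2$, the same use of the norm estimate \eqref{equa2} with $(C6)$--$(C7)$ to get $Q\overline{P_a}\subset\overline{P_a}$ and $Q\overline{P_c}\subset\overline{P_c}$, the same Harnack-type inequality $\alpha(Qu)\ge\frac{\delta}{T+\gamma}\|Qu\|>\frac{\delta d}{T+\gamma}>b$ for condition $(iii)$, and the same use of $(C8)$ on $Y_1$ (with $\int_{Y_1}a\,\nabla r>0$) for condition $(i)$. The one obstacle you flag --- upgrading the asymptotic hypotheses $(C6)$--$(C8)$ to bounds valid on the whole ranges $[0,a]$, $[0,c]$ and $[b,d]$ --- is a genuine issue, but it is left equally unaddressed in the paper's own proof, which simply applies these limit conditions as if they were global pointwise bounds.
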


\begin{proof}
The proof passes by three lemmas.
\begin{lemma}
The following relations hold:
$$
Q\overline{P_{a}} \subset \overline{P_{a}} \, ,
\quad Q\overline{P_{c}} \subset \overline{P_{c}} \, .
$$
\end{lemma}
\begin{proof}
Using condition $(C6)$ for $\varepsilon_1>0$
such that $0<x\leq \varepsilon_1$, we have
$$
f(x,\psi(s))< (lx)^{p-1} \quad \mbox {for each }  s\in [-r,0]_\mathbb{T}.
$$
Applying condition $(C7)$ we get
$$
f(x_1,x_2)<\max\{x_1^{p-1},x_2^{p-1}\}l^{p-1}
$$
for $\varepsilon_2>0$ such that $0<x_1\leq \varepsilon_2$, $0<x_2\leq \varepsilon_2$.
Put $\varepsilon = \min\{\varepsilon_1,\varepsilon_2\}$. Then for $\|u\| \leq a$ and
from \eqref{equa2} we have
\begin{equation*}
\begin{aligned}
& |Q  u| \leq \|Q u\|\\
&\leq (T+\gamma)\lambda^{q-1}\phi_q\left(\int_{0}^{T}
a(r)f(u(r),u(\omega(r)))\nabla r\right)\\
&=(T+\gamma)\lambda^{q-1}\left[\phi_q\left(\int_{Y_1}
a(r)f\left(u(r),\psi(\omega(r))\right)
\nabla r+\int_{Y_2}
a(r)f\left(u(r),u(\omega(r))\right)
\nabla r\right)\right]\\
&\leq l(T+\gamma)\lambda^{q-1}\|u\| \phi_q
\left(\int_{0}^{T}a(r)\nabla r\right)\\
& \leq  l(T+\gamma)\lambda^{q-1}a
\phi_q\left(\int_{0}^{T}a(r)\nabla r\right)\\
&=a \, .
\end{aligned}
\end{equation*}
Then $ Q\overline{P_{a}} \subset \overline{P_{a}}$.
Similarly one can show that
$Q\overline{P_{c}} \subset \overline{P_{c}}$.
\end{proof}

\begin{lemma}
The set
$$\{ u \in P(\alpha, b, d) \ | \  \alpha(u) > b \} $$
is nonempty, and
$$ \alpha(Qu) > b, \mbox{ if } u \in P(\alpha, b, d).$$
\end{lemma}
\begin{proof}
Applying hypothesis $(C8)$ we have
$$
f(u, \psi(s)) > \phi_{p}(mu) \mbox{ for each } s \in [-r, 0]_\mathbb{T}.
$$
We also have $u(t) \geq \frac{\delta}{T+ \gamma} \| u\|$.
Let $u \in P(\alpha, b, d)$. Then, $b \leq u(t) \leq d$. Hence,
\begin{equation*}
\begin{aligned}
\alpha(Q u)&= (Q u)(T-\xi)\\
& \geq \frac{\delta}{T+ \gamma} \| Q u\|\\
&\geq \delta\phi_q\left(\int_{0}^{T}\lambda a(r)f(u(r),u(\omega(r)))\nabla r\right)\\
& \geq \delta\lambda^{q-1}\phi_q\left(\int_{Y_1} a(r)f(u(r),\psi(\omega(r)))
\nabla r + \int_{Y_2} a(r)f(u(r),\psi(\omega(r)))
\nabla r\right) \\
&\geq \delta\lambda^{q-1}\phi_q\left(\int_{Y_1} a(r)f\left(u(r),\psi\left(\omega(r)\right)\right)
\nabla r\right)\\
&\geq m\delta\lambda^{q-1}\min_{t\in Y_1}\{u(t)\}\phi_q
 \left(\int_{Y_1} a(r)\nabla r\right)\\
& \geq bm \delta  \lambda^{q-1}\phi_q\left(\int_{Y_1} a(r) \nabla r\right)\\
& \geq b \, .
\end{aligned}
\end{equation*}
\end{proof}
\begin{lemma}
For all $u \in P(\alpha, b, c)$ and  $\|Qu\| > d$ one has
$\alpha (Qu) > b$.
\end{lemma}

\begin{proof}
Using the fact that $\delta \leq T + \gamma$, we have
\begin{equation*}
\begin{aligned}
\alpha(Q u)&= (Q u)(T-\xi)\\
&\geq \frac{\delta}{T + \gamma}  \| Q u\|\\
&\geq \frac{\delta d}{T + \gamma} \\
 &\geq b.
 \end{aligned}
\end{equation*}
\end{proof}
Applying the Leggett--Williams theorem (Theorem~\ref{thm1}),
the proof of Theorem~\ref{thm:5.2} is complete.
\end{proof}

\begin{example}
Let $\mathbb{T}=\left[-\frac{3}{4},-\frac{1}{4}\right]
\cup \left\{ 0,\frac{3}{4}\right\}
\cup \left\{(\frac{1}{2})^{\mathbb{N}_0}\right\}$,
where $\mathbb{N}_0$ denotes the
set of all nonnegative integers. Consider the following
$p$-Laplacian functional dynamic equation on the time scale
$\mathbb{T}$:
\begin{equation}
\label{example3}
\begin{gathered}
\hspace{0.1cm}[\Phi _p(u^{\Delta }(t))] ^{\nabla
}+(u_{1}+u_{2})^{2}=0,\quad t\in (0,1)_{\mathbb{T}}, \\
\psi (t)\equiv 0,
\quad t\in \left[-\frac{3}{4},0\right]_{\mathbb{T}},\\
u(0)-B_0\left(u^{\Delta }\left(\frac{1}{4}\right)\right)=0,
\quad u^{\Delta}(1)=0,
\end{gathered}
\end{equation}
where $T=1$, $p=\frac{3}{2}$, $q=3$, $a(t)\equiv 1,B_{0}(s)=s$,
$w(t) :[0,1]_{\mathbb{T}}\to [-\frac{3}{4},1]_{\mathbb{T}}$ with
$w(t)=t-\frac{3}{4}$, $r=\frac{3}{4}$, $\eta=\frac{1}{4}$,
$l=\frac{1}{2}$, $m=1$, and $f(u,\psi (t))=u^{2}$,
$f(u_1,u_2)=\left(u_{1}+u_{2}\right)^{2}$. We deduce that
$Y_1=[0,\frac{3}{4})_{\mathbb{T}}$,
$Y_2=\left[\frac{3}{4},1\right]_{\mathbb{T}}$. It is easy to
see that hypotheses $(C1)$--$(C5)$ are verified. On the other hand,
notice that $\lim_{x\to 0^+}\frac{f(x,\psi(s))}{x^{p-1}}= 0
<l^{p-1}$ and $\lim_{x\to\infty}\frac{f(x,\psi(s))}{x^{p-1}}
=+\infty >m^{p-1}$. Thus, hypotheses $(C6)$--$(C8)$ are obviously
satisfied. Then, by Theorem~\ref{thm:5.2}, the problem
\eqref{example3} has at least three positive solutions of the form
\[
u(t)
=\begin{cases}
u_i(t), & t\in [0,1]_{\mathbb{T}},\quad i=1,2,3, \\
\psi (t), & t\in [-\frac{3}{4},0]_{\mathbb{T}}.
\end{cases}
\]
\end{example}


\subsection*{Acknowledgments}

This work was supported by FEDER funds through
COMPETE --- Operational Programme Factors of Competitiveness
(``Programa Operacional Factores de Competitividade'')
and by Portuguese funds through the
{\it Center for Research and Development
in Mathematics and Applications} (University of Aveiro)
and the Portuguese Foundation for Science and Technology
(``FCT --- Funda\c{c}\~{a}o para a Ci\^{e}ncia e a Tecnologia''),
within project PEst-C/MAT/UI4106/2011
with COMPETE number FCOMP-01-0124-FEDER-022690.
The authors were also supported by the project
\emph{New Explorations in Control Theory Through Advanced Research} (NECTAR)
cofinanced by FCT, Portugal, and the \emph{Centre National de la Recherche
Scientifique et Technique} (CNRST), Morocco.



\end{document}